\newcommand{\complex}{{\mathbb C}}
\newcommand{\naturals}{{\mathbb N}}
\newcommand{\reals}{{\mathbb R}}
\newcommand{\integers}{{\mathbb Z}}
\newcommand{\cala}{{\mathcal A}}
\newcommand{\calf}{{\mathcal F}}
\newcommand{\cals}{{\mathcal S}}
\newcommand{\calv}{{\mathcal V}}
\newcommand{\calw}{{\mathcal W}}
\theoremstyle{plain}
        \newtheorem{theorem}{Theorem}[section]
        \newtheorem{lemma}[theorem]{Lemma}
        \newtheorem{remark}[theorem]{Remark}
        \newtheorem{proposition}[theorem]{Proposition}
        \newtheorem{example}[theorem]{Example}
\theoremstyle{definition}
        \newtheorem{definition}[theorem]{Definition}
\newcommand{\op}{\operatorname{Op}}
\title{Dunkl operator and quantization of $\mathbb{Z}_2$-singularity}
\author{Gilles Halbout}
\address{Institut de Math\'ematiques et de Mod\'elisation de Montpellier,
Universit\'e de Montpellier 2, CC5149, Place Eug\`ene Bataillon,
F-34095 Montpellier CEDEX 5, France}
\email{halbout@math.univ-montp2.fr}
\author{Xiang Tang}
\address{Department of Mathematics, Washington University, St. Louis, Missouri, USA, 63130}
\email{xtang@math.wustl.edu}
\begin{document}
\begin{abstract}
Let $(X,\omega)$ be a symplectic orbifold which is locally like the
quotient of a $\mathbb{Z}_2$ action on $\reals^n$. Let
$A^{((\hbar))}_X$ be a deformation quantization of $X$ constructed
via the standard Fedosov method with characteristic class being
$\omega$. In this paper, we construct a universal deformation of the
algebra $A^{((\hbar))}_X$ parametrized by codimension 2 components
of the associated inertia orbifold $\widetilde{X}$. This partially
confirms
 a conjecture of Dolgushev and Etingof (see \cite{D-E}) in the case
of $\mathbb{Z}_2$ orbifolds. To do so, we generalize the
interpretation of Moyal star-product as a composition of symbols of
pseudodifferential operators in the case where partial derivatives
are replaced with Dunkl operators. The star-products we obtain can
be seen as globalizations of symplectic reflection algebras
(\cite{E-G}).
\end{abstract}
\maketitle
\section{Introduction}

In this paper, we construct exotic deformation quantizations of
symplectic orbifolds. Orbifolds provide a large class of examples of
topological spaces which are obtained as quotients of manifolds by
actions of compact groups. We consider a compact manifold $M$
endowed with a symplectic structure $\omega$ and with a
$\mathbb{Z}_2$ action which preserves the symplectic structure.
Given these data one can construct a $\mathbb{Z}_2$-invariant
(associative) star-product (using Fedosov method via a
$\mathbb{Z}_2$ invariant connection for instance) with the characteristic class being $\omega$. The restriction
of the invariant star-product on
$C^\infty(M)^{\integers_2}[[\hbar]]$ defines a deformation
quantization of the orbifold $X=M/\integers_2$.

\medskip

Let $(C^\infty(M)^{\integers_2}((\hbar)), \star)$ denote the star
algebra on $M/\integers_2$ with the characteristic class being
$\omega$. In \cite[Theorem 1.1]{D-E} and \cite[Theorem
VII]{PflPosTanTse}, the Hochschild cohomology of
$(C^\infty(M)^{\integers_2}((\hbar)), \star)$ was computed to be
equal to the cohomology of the corresponding inertia orbifold with
coefficient in $\complex((\hbar))$. In particular, Dolgushev and
Etingof (\cite{D-E}) conjectured\footnote{The original conjecture
states for arbitrary orbifolds. In this paper, we focus on
$\integers_2$-orbifolds.} that deformations of the algebra
$(C^\infty(M)^{\integers_2}((\hbar)), \star)$ are unobstructed.

Let $\gamma$ be the non unital element in $\integers_2$ and
$M^\gamma$ be the $\gamma$ fixed point subsets. The inertia orbifold
$\widetilde{X}$ associated to the quotient $M/\integers_2$ is equal
to $\widetilde{X}=M/\integers_2\bigsqcup M^\gamma/\integers_2$. The
Hochschild cohomology of $(C^\infty(M)^{\integers_2}((\hbar)),
\star)$ is equal to
\[
H^2(C^\infty(M)^{\integers_2}((\hbar)),
C^\infty(M)^{\integers_2}((\hbar)))=H^2(M/\integers_2)((\hbar))\bigoplus
H^0(M^\gamma_2/\integers_2)((\hbar)),
\]
where $M^\gamma_2$ is the union of components of $M^\gamma$ of
codimension 2. The Dolgushev-Etingof conjecture implies that the
algebra $(C^\infty(M)^{\integers_2}((\hbar)), \star)$ has a
deformation coming from every $\gamma$ fixed point component with
codimension 2.

\medskip

The aim of this paper is to prove that for every class in
$H^0(M^\gamma_2/\integers_2)((\hbar))$, namely every codimension 2
component of the inertia orbifold $\widetilde{X}$, we are able to
construct a deformation of the algebra
$(C^\infty(M)^{\integers_2}((\hbar)),\star)$. Moreover, there exists
a universal deformation of
$(C^\infty(M)^{\integers_2}((\hbar)),\star)$ parametrized by
$H^0(M^\gamma_2/\integers_2)((\hbar))$. This partially confirms the
conjecture of Dolgushev and Etingof in the case of $\mathbb{Z}_2$
orbifolds. Our result is not far away from the full
Dolgushev-Etingof conjecture, and the detailed relations are
explained in Remark \ref{rmk:conjecture}. The Dolgushev-Etingof
conjecture was proved by Etingof \cite{e} when an orbifold is the
cotangent bundle of a global quotient orbifold. It is the first time
that we know that a large portion of this conjecture holds true for
a large class of compact symplectic orbifolds.

\medskip

In the case where $M$ is $\mathbb{R}^{2n}$, the deformations we get
are formal versions of symplectic reflection algebras (\cite{E-G})
and our construction can be seen as a globalization of such
algebras.

\medskip

To globalize star-products on $\mathbb{R}^{2n}$, one should start
with local formulas of the star-products like the Moyal product and
Kontsevich star product \cite{Ko}. Moyal product, which deforms the
standard symplectic structure on $\mathbb{R}^{2n}$, can be described
using composition of symbols of pseudodifferential operators on
$\mathbb{R}^{n}$. One of the main ideas of the paper is to get a
generalized Moyal product formula out of composition of symbols
associated to difference-pseudodifferential operators.  Following
this approach, we replace partial derivatives with Dunkl operators
to take into account the $\mathbb{Z}_2$ action and define local
formulas for deformations of any non-commutative Poisson structures \cite{H-T}
associated with $\omega$. In this sense, we can also view our
construction as globalization of difference-pseudodifferential
operators of ``Dunkl type".

\medskip

In Section 2, we recall general material on Dunkl operators and
Dunkl pseudodifferential operators. This will allow us to construct
an operator-symbol product formula in Section 3: we will get two
families of $\mathbb{Z}_2$-local bilinear operators satisfying
properties summarized in Theorem \ref{thm:symbol-cal}. Those
operators will allow us to define a $\gamma$-local associative star
product (Proposition \ref{prop:assoc-poly}) generalizing the
standard Moyal star product. Interesting combinatorics appears in
the associativity of the new star product. The proof of this main
theorem is done in Section 5, using series expansions of
pseudodifferential calculous and explicit computations.

\medskip

Section 4 is devoted to globalization and thus to give a positive
answer to Dolgushev-Etingof conjecture. The main idea there is to
use Fedosov standard method on the complement of a tubular
neighborhood of the $\mathbb{Z}_2$ fixed point submanifold of
codimension 2. This can be done as the star product there is locally
equivalent to the Moyal product. In the neighborhood of the fixed
point submanifold of codimension 2, we use our generalized Moyal
product and Fedosov's method of quantization of fixed point
submanifolds. The fact that both the Moyal product and the
generalized Moyal product are $\gamma$-local allows us to restrict
the two deformations above on the intersections of the two open
sets, which is diffeomorphic to the tubular neighborhood of the
fixed point submanifold of codimension 2 with the fixed point
submanifold removed. We are able to glue the two deformations on the
intersection together to get a global deformation on $M/\integers_2$
as $\integers_2$ acts on the intersection freely.

\medskip

Here are some remarks and questions for future directions.
\begin{enumerate}
\item The fact that the group acting on $M$ is $\mathbb{Z}_2$ is of major
importance for our construction: if $M=\mathbb{R}^{2n}$, the
$\integers_2$ action stabilizes the two corresponding copies of
$\mathbb{R}^{n}$ and thus allows us to play with (Dunkl) operators.
Such an idea was also used by Etingof \cite{e} in his construction
of universal deformation of the cotangent bundle of a global
quotient orbifold. To extend our results to more general orbifolds,
an important question to answer is how to quantize a symplectic
orbifold when such a ``polarization" of the symplectic orbifold does
not exist.

\item One could try to generalize our results to every
$\mathbb{Z}_2$ invariant Poisson structure (and so deform the
corresponding noncommutative Poisson structure). One would expect
that with the help of the above mentioned polarization on a
$\integers_2$ orbifold, we can play with the corresponding
conjectural generalized Poisson sigma models to define the
generalized Moyal products.
\item
Another natural question is to compute Hochschild cohomology (and
$K$-theory) of our deformed algebra. It will be interesting to
develop an algebraic index theorem for our deformed algebra. We hope
to extract the information of singularities from the algebraic index
theorem.
\end{enumerate}

\medskip

\noindent{\bf Acknowledgments:} We would like to thank Calaque,
Dolgushev, and Posthuma for helpful discussions. The research of the
second author is partially supported by NSF Grant 0703775.
\section{Dunkl operator}
In this section, we briefly review the theory of Dunkl operators,
Dunkl transforms, and Dunkl pseudodifferential operators, which we
will need in this paper. We will focus ourselves to a very special
case in the theory of Dunkl operators. Most constructions and
results we are reviewing go back to Dunkl's original work \cite{du}.
We refer readers to \cite{ro} and \cite{dejeu} for the proofs of the
statements in this section.

Let $\integers_2=\{1, \gamma\}$ be the group of two elements. It acts
on the space $\reals$ by reflection. We will use
$C_c^\infty(\reals)$ to denote the space of compactly supported
smooth functions on $\reals$, and $\cals(\reals)$ to denote the
space of Schwartz functions on $\reals$. For a real parameter
$k \geq  0$, we consider the following
differential-difference operator defined by
\[
T_k(f)(x)=\frac{df}{dx}(x)+ k\frac{f(x)-f(-x)}{x}, \quad f\in
C^\infty(\reals),
\]
which is called Dunkl operator.

For the spectral  of the operator $T_k$, one considers the following
equation
\[
\left\{\begin{array}{ll}T_k(u)(x)&=-i\lambda u(x)\\
u(0)=1&\end{array}\right.
\]
for $\lambda \in \complex$.

The above equation actually has a unique solution $E_k(x,-i
\lambda)$, called Dunkl kernel given by
\[
E_k(x,-i\lambda)=j_{k-1/2}(i\lambda x)+\frac{\lambda
x}{2k+1}j_{k+1/2}(i\lambda x),
\]
where $j_\alpha$ is the ``normalized first kind Bessel function of
order $\alpha$". From the above expression, one easily see that
$E_{k}(x, -i\lambda)$ can be extended to a holomorphic function of
variable $x\in \complex,\lambda\in \complex,\operatorname{Re} k \geq
0$. One can even show that for $x,\lambda\in \reals$,
\[
|E_k(x, i\lambda)|\leq 1.
\]

We consider the following measure $\mu_k$ on $\reals$ by
\[
d\mu_k(x)=\frac{|x|^{2k}}{2^{k+1/2}\Gamma(k+1/2)}dx,
\]
with $\Gamma(x)$ the Gamma function. It is not difficult to check
that the Dunkl operator $T_k$ is skew symmetric with respect to the
$L^2$-norm associated to the measure $\mu_k$, i.e.
\[
\int_\reals T_k(f)(x)\bar{g}d\mu_k(x)=-\int_\reals
f(x)\overline{T_k(g)}(x)d\mu_k(x).
\]
For $1\leq p< \infty$, define $L^p_k(\reals)$ to be the space of
measurable complex valued functions on $\reals$ such that
\[
||f||_{p,k}=\left(\int_\reals |f(x)|^pd\mu_k(x)\right)^{1/p}<\infty.
\]

For $f\in L^1_{k}(\reals)$, define the Dunkl transform $\calf_k$ of
$f$ by
\[
\calf_k(f)(\lambda)=\int_\reals E_{k}(y,-i\lambda)f(y)d\mu_k(y).
\]
When $f$ is in $\cals(\reals)$, then
\begin{enumerate}
\item $\calf_k(f)\in C^\infty(\reals)$, and
$T_k \calf_k(f)=-\calf_k(ixf)$,
\item $\calf_k(T_k f)=i\lambda \calf_k(f)$,
\item the Dunkl transform leaves $\cals(\reals)$ invariant,
\item for all $f\in L^1_{k}(\reals)$ such that $\calf_k(f)\in
L^1_{k}(\reals)$, the inverse Dunkl transform is defined to be
\[
\calf^{-1}_k(f)(x)=\int_\reals
E_k(x,i\lambda)f(\lambda)d\mu_k(\lambda),
\]
\item for $f\in L^2_k(\reals)$,
$||\calf_k(f)||_{2,k}=||f||_{2,k}$.
\end{enumerate}
\section{Generalized pseudodifferential operators and Moyal type formula}\label{section:dunkl-weyl-algebra}
Pseudo-differential operators associated to Dunkl operators in the
case of $\integers_2$ have been studied by Dachraoui \cite{dach} and
Abdelkefi-Amri-Sifi \cite{aas}. Let $D(\reals)$ be the algebra of
differential operators on $\reals$. In this section, our goal is to
use the idea of operator-symbol calculus to define an associative
deformation of the algebra $C^\infty(\reals^2)\rtimes \integers_2$
and also $D(\reals)\rtimes \integers_2$. When one restricts such a
deformation to the
subalgebra\footnote{$\operatorname{Poly}(\reals^2)$ denotes the
algebra of polynomial functions on $\reals^2$.}
$\operatorname{Poly}(\reals^2)\rtimes \integers_2$, we obtain a
Moyal type formula for the symplectic reflection algebra introduced
by Etingof-Ginzburg \cite{E-G} in the case of $\integers_2$ action
on $\reals^2$ by reflection.
\subsection{Operator product}\label{subsec:operator}
\begin{definition}
\label{dfn:symbol}We say that a function $a(x,p)\in
C^\infty(\reals^2)$, a complex valued function on $\reals^2$,
belongs to the symbol class $\mathfrak{S}_0^m$ if for any $r,s\in
\mathbb{N}$,
\[
|\partial_p^r\partial_x^sa(x,p)|\leq C_{m,r,s}(1+|p|^2)^{(m-r)/2}.
\]
\end{definition}

\begin{definition}
\label{dfn:pseud-op} Let $a\in \mathfrak{S}_0^m$, then define
$\op_k(a)$ a linear operator on $\cals(\reals)$ by
\[
\op_k(a)(f)(x)=\int_\reals a(x,p)E_k(x,ip)\calf_k(f)(p)d\mu_k(p).
\]
\end{definition}
Dachraoui \cite[Thm. 4.1]{dach} proves the following theorem :
\begin{theorem}(\cite{dach}) Let $a\in \mathfrak{S}_0^m$, then the
operator $\op_k(a)$ associated to $a$ is a linear continuous mapping
from $\cals(\reals)$ to itself.
\end{theorem}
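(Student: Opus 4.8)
The plan is to run the classical argument that a symbol of finite order defines a continuous operator on Schwartz space, with the Fourier transform and the plane wave $e^{ixp}$ replaced by the Dunkl transform $\calf_k$ and the Dunkl kernel $E_k(x,ip)$. The tools we use are: (i) $\calf_k$ is a continuous linear map of $\cals(\reals)$ into itself, with $T_k^{(p)}\calf_k(f)=-\calf_k(ixf)$, which follows from properties (1)--(3) of Section~2; (ii) $|E_k(x,ip)|\le 1$ for $x,p\in\reals$, together with the elementary bounds $|\partial_x^s E_k(x,ip)|\le C_{s,R}(1+|p|)^s$ for $|x|\le R$, read off from the Bessel representation of $E_k$ and the boundedness of Bessel functions and their derivatives on $\reals$ (here $k\ge 0$ is used); and (iii) the eigenfunction relations $T_k^{(x)}E_k(x,ip)=ip\,E_k(x,ip)$ and $T_k^{(p)}E_k(x,ip)=ix\,E_k(x,ip)$, the latter coming from the symmetry of the Dunkl kernel, together with the skew-symmetry of $T_k$ for $d\mu_k$, which allows integration by parts in $p$ with no boundary terms (every integrand that occurs is, after multiplication by $|p|^{2k}$, rapidly decreasing).

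Because $|a(x,p)E_k(x,ip)\calf_k(f)(p)|\le C(1+|p|^2)^{m/2}|\calf_k(f)(p)|$ uniformly in $x$ and $\calf_k(f)\in\cals(\reals)$, the defining integral converges, and (ii) together with the rapid decay of $\calf_k(f)$ justifies differentiating under the integral sign any number of times in $x$, so $\op_k(a)(f)\in C^\infty(\reals)$. To get rapid decay of all $x$-derivatives I would set up two commutation identities. First, using $x\,E_k(x,ip)=-iT_k^{(p)}E_k(x,ip)$ and one integration by parts in $p$, $x\cdot\op_k(a)(f)$ becomes a finite sum of expressions $\op_k(b)(g)$ with $b\in\mathfrak{S}_0^{m}$ and $g\in\cals(\reals)$ obtained from $f$ by multiplication by $x$ and by the reflection $f\mapsto f(-\cdot)$ (invoking $T_k^{(p)}\calf_k(f)=-\calf_k(ixf)$). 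Second, $\partial_x E_k(x,ip)=ip\,E_k(x,ip)-k\,(E_k(x,ip)-E_k(x,-ip))/x$ rewrites $\partial_x(\op_k(a)(f))$ as a finite sum of terms $\op_k(b)(g)$ with $b\in\mathfrak{S}_0^{m+1}$, $g\in\cals(\reals)$, plus terms $x^{-1}\big(\op_k(b_1)(g_1)-\op_k(b_2)(g_2)\big)$ in which the bracketed difference vanishes at $x=0$, so the quotient is smooth. Iterating these identities expresses $x^N\partial_x^s(\op_k(a)(f))$, for every $N,s$, as a finite sum of $\op_k(b)(g)$'s (with $b$ a symbol of order $\le m+s$ and $g\in\cals(\reals)$) and of $x^N$ times smooth functions coming from the reflection-difference remainders. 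Applying $T_k^{(p)}$-integration by parts $N$ further times to each $\op_k(b)(g)$ bounds it by $\int_\reals(1+|p|^2)^{(m+s)/2}\,|h(p)|\,d\mu_k(p)$ for some $h\in\cals(\reals)$, which is finite and independent of $x$; combined with the obvious bounds on the smooth remainders this yields $\sup_x|x^N\partial_x^s(\op_k(a)(f))(x)|<\infty$, i.e. $\op_k(a)(f)\in\cals(\reals)$. Moreover each $h$ here is obtained from $f$ by $\calf_k$ followed by finitely many of the continuous operations ``multiply by a polynomial'' and ``apply $T_k^{(p)}$'', so the bound is dominated by a finite sum of Schwartz seminorms of $f$, with constants built from the $C_{m,r,s}$ of Definition~\ref{dfn:symbol}; this gives the continuity of $f\mapsto\op_k(a)(f)$.

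The step I expect to be the main obstacle is the combinatorial bookkeeping underlying the two commutation identities: one must check that $\mathfrak{S}_0^m$ is stable under $\partial_x$ and under the $x$-difference quotient $a\mapsto(a(x,p)-a(-x,p))/x$, is carried into $\mathfrak{S}_0^{m+1}$ by multiplication by $p$ and into $\mathfrak{S}_0^{m-1}$ by $\partial_p$ and by the $p$-difference quotient $a\mapsto(a(x,p)-a(x,-p))/p$ (this last via $\tfrac{a(x,p)-a(x,-p)}{p}=2\int_0^1(\partial_p a)(x,(2t-1)p)\,dt$ together with a short separate estimate for $|p|\le 1$), and then keep track of how the symbol order and the Schwartz-seminorm indices of the $g$-factors grow as the identities are iterated, while verifying that every inverse power of $x$ genuinely sits inside a difference vanishing to the right order at $0$. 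The purely analytic points --- convergence, differentiation under the integral, and the absence of boundary terms in the integrations by parts --- are routine once (ii) and (iii) are in place.
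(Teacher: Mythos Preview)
The paper does not give its own proof of this statement: it simply cites \cite[Thm.~4.1]{dach} and moves on. So there is nothing to compare your argument against within the paper itself.

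Your outline is the natural Dunkl adaptation of the classical Kohn--Nirenberg proof that $\op(a):\cals\to\cals$ is continuous, and the ingredients you list (continuity of $\calf_k$ on $\cals$, the bound $|E_k|\le 1$, the eigenfunction and symmetry relations for $E_k$, skew-symmetry of $T_k$ for $d\mu_k$) are exactly the ones needed. The decomposition you indicate for the $p$-integration by parts---writing the difference quotient as $\tfrac{a(x,p)-a(x,-p)}{p}\,g(p)+a(x,-p)\,\tfrac{g(p)-g(-p)}{p}$ and checking the symbol classes are preserved---is correct and is the right way to keep everything in the form $\op_k(b)(g)$.

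One suggestion that would substantially lighten the bookkeeping you flag as the main obstacle: instead of tracking $x^N\partial_x^s$, use the equivalent family of Schwartz seminorms $\sup_x|x^N T_k^s u(x)|$. Since $T_k^{(x)}E_k(x,ip)=ip\,E_k(x,ip)$ exactly (no reflection remainder), applying $T_k^{(x)}$ to the integrand produces only terms of the form (symbol in $\mathfrak{S}_0^{m+1}$)$\times E_k\times$(Schwartz), plus the reflection of the whole expression $\op_k(a)(f)(-x)$ coming from the difference part of $T_k$ acting on the outer function; the $1/x$ factors then never appear inside the integral and the iteration is clean. This is presumably close to what Dachraoui does, and it bypasses the need to verify that each bracketed difference vanishes to the correct order at $x=0$ after repeated differentiation. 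With either organization the argument goes through.
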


\begin{remark}\label{rmk:schwarz}
For $a\in \mathfrak {S}_0^0$,  \cite[Proposition 4.1]{aas} proves
that $\op_k(a)$ defines a bounded operator on $L^p_k(\reals)$ for
$1< p<\infty$.
\end{remark}

\begin{example}
For $a(x,p)=x^ip^j$, $\op_k(a)=x^iT_k^j$. We remark that though polynomials are not in the symbol class $\mathfrak{S}_0^m$, for any polynomial $a$, $\op_k(a)$ is a well defined linear operator on $\cals(\reals)$, which is sufficient for our following developments.
\end{example}

We consider the translation operator $\hat{\gamma}: f(x)\mapsto
f(-x)$. It is easily seen that $\hat{\gamma}$ is an isometry on
$L^2_k(\reals)$. We have the following observation :
\begin{lemma}\label{lem:uniqueness}
For  $ a_j, b_j\in \operatorname{Poly}(\reals^2), j=0,\dots, n$, if
$\sum_jk^j(\op_k(a_j)+\op_k(b_j)\circ \hat{\gamma})$ is the zero
operator for any $k\geq 0$, then $a_j=b_j=0, j=0,\dots, n$.
\end{lemma}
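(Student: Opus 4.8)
The plan is to reduce the statement to a question about ordinary differential and difference operators and then to separate variables systematically. First I would expand each $\op_k(a_j)$ and $\op_k(b_j)$ explicitly: since the $a_j, b_j$ are polynomials, by the Example we have $\op_k(x^i p^\ell) = x^i T_k^\ell$, so $\op_k(a_j)$ is a finite sum $\sum_{i,\ell} c^{(j)}_{i\ell} x^i T_k^\ell$ and similarly for $\op_k(b_j)$, with $\op_k(b_j)\circ\hat\gamma$ applying first $\hat\gamma$ then the differential-difference operator. The hypothesis says $\sum_j k^j\bigl(\op_k(a_j) + \op_k(b_j)\circ\hat\gamma\bigr) = 0$ as an operator on $\cals(\reals)$ for every $k \ge 0$.

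The next step is to exploit the parameter $k$. The operator $T_k$ depends polynomially on $k$ (write $T_k = \partial_x + k\,\delta$ where $\delta f(x) = (f(x)-f(-x))/x$), so after expanding $T_k^\ell$ in powers of $k$ and collecting, the whole expression becomes a polynomial in $k$ with operator coefficients; vanishing for all $k\ge 0$ forces each coefficient to vanish. However, I expect a cleaner route: apply the identity to a well-chosen family of test functions in $\cals(\reals)$ and use the known action of $T_k$ and $\hat\gamma$ on them. A natural choice is to test against functions of definite parity, or against the Dunkl kernel / Gaussian-type functions $E_k(x, i\lambda) e^{-x^2/2}$ on which $T_k$ acts in a controlled way, or simply against $e^{-\lambda x^2}$ and monomials times such Gaussians. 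Evaluating the resulting functions at a point (say $x=0$ or comparing parities) and varying $\lambda$ should let us peel off the unknown polynomial coefficients one at a time.

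More concretely, I would split the operator into its "even part" (the $\op_k(a_j)$ terms, which commute with $\hat\gamma$ in a controlled way) and the part twisted by $\hat\gamma$, and test separately on even and odd functions: applying the zero operator to $f$ and to $\hat\gamma f$ and taking sum and difference isolates $\sum_j k^j\op_k(a_j)$ and $\sum_j k^j\op_k(b_j)\circ\hat\gamma$ individually, hence (post-composing with $\hat\gamma$) $\sum_j k^j\op_k(b_j)$ as well. So it suffices to show: if $\sum_j k^j \op_k(c_j) = 0$ on $\cals(\reals)$ for all $k\ge 0$, with $c_j$ polynomial, then all $c_j = 0$. Writing $\op_k(c_j) = \sum_{i,\ell} c^{(j)}_{i\ell} x^i T_k^\ell$, I would apply this to monomials $x^N$ (which lie in no Schwartz space but can be replaced by $x^N e^{-\epsilon x^2}$ and a limiting/leading-order argument, or one works directly with $\op_k$ defined on polynomials as noted in the Example) and read off the top-degree behavior in $x$ to force the highest $x^i$ coefficients to vanish, then induct downward on the $x$-degree, and for fixed $x$-degree induct downward on the $T_k$-power by comparing growth in the relevant variable.

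The main obstacle I anticipate is the interplay between the $k$-dependence and the operator structure: $T_k^\ell$ is not simply $k^\ell$ times a fixed operator, so the powers of $k$ coming from the explicit prefactors $k^j$ mix with powers of $k$ hidden inside $T_k^\ell$, and one must check that no cancellation across different $(j,\ell)$ pairs can occur. Handling this cleanly is where the real work lies; I expect the resolution is that for each fixed total power of $k$, the associated operator coefficient is a genuine differential-difference operator of determined order, and a single good test function (e.g.\ a Gaussian times a polynomial, where the Dunkl transform and $T_k$ are fully explicit) separates all the data. Once that separation lemma is in place, linear independence of $\{x^i T_k^\ell\}$ for fixed $k$ — which is classical, since $T_k^\ell = \partial_x^\ell + (\text{lower order})$ on even functions — finishes the argument.
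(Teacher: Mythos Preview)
Your separation step is the gap. Applying the hypothesis to $f$ and to $\hat\gamma f$ and then taking sum and difference does \emph{not} isolate $\sum_j k^j\op_k(a_j)$ from $\sum_j k^j\op_k(b_j)\circ\hat\gamma$: since $\hat\gamma f$ ranges over all of $\cals(\reals)$ as $f$ does, the second application carries no new information, and your sum/difference merely restates $L+M\hat\gamma=0$ as ``$(L+M)$ kills even functions and $(L-M)$ kills odd functions,'' which is \emph{equivalent} to the original identity. Decoupling the $a_j$ from the $b_j$ is precisely the content of the lemma, and parity alone cannot achieve it.

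The paper takes a different and short route: it uses the integral definition of $\op_k$ together with $\calf_k(\hat\gamma f)(p)=\calf_k(f)(-p)$ to convert the operator identity into the pointwise equation
\[
\sum_j k^j\bigl(a_j(x,p)\,E_k(x,ip)+b_j(x,-p)\,E_k(x,-ip)\bigr)=0\qquad\text{for all }x,p.
\]
At $k=0$ the Dunkl kernel is $e^{ixp}$, so one gets $a_0(x,p)e^{ixp}+b_0(x,-p)e^{-ixp}=0$; since $e^{ixp}$ and $e^{-ixp}$ are linearly independent over polynomials in $(x,p)$, both $a_0$ and $b_0$ vanish, and one inducts on $j$.

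Your instinct to pass to $k=0$ and induct is right, and there is a repair entirely within your operator framework. At $k=0$ both $\op_0(a_0)$ and $\op_0(b_0)$ are ordinary differential operators, hence local; the identity $\op_0(a_0)f(x_0)=-\op_0(b_0)(\hat\gamma f)(x_0)$ equates, for $x_0\neq 0$, a quantity depending only on the jet of $f$ at $x_0$ with one depending only on the jet of $f$ at $-x_0$, and varying $f$ forces both sides to vanish. Hence $a_0=b_0=0$; then divide through by $k$ and repeat. Note in particular that the ``mixing of $k$-powers'' you flagged as the main obstacle never actually materializes, because the $k^0$ term of $T_k^\ell$ is simply $\partial_x^\ell$.
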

\begin{proof}
As $\op_k(\sum_j k^ja_j)+\op_k(\sum_j k^jb_j)\circ \hat{\gamma}=0$,
then
\begin{equation}\label{eq:a-b-eq}
\begin{split}
&\int_\reals
\big(\sum_jk^ja_j(x,p)\big)E_k(x,ip)\calf_k(f)(p)d\mu_k(p)\\
&\qquad +\int_\reals \big(\sum_jk^jb_j(x,p)\big)E_k(x, ip)\calf_k
(\hat{\gamma}(f))(p)d\mu_k(p)=0,
\end{split}
\end{equation}
for any $f\in \cals(\reals)$.

We notice that $\calf_k(\hat{\gamma}(f))(p)=\calf_k(f)(-p)$, then
Equation (\ref{eq:a-b-eq}) becomes
\[
\int_{\reals}
\sum_jk^j\big(a_j(x,p)E_k(x,ip)+b_j(x,-p)E_{k}(x,-ip)\big)
\calf_k(f)(p)d\mu_k(p)=0,
\]
for any $f\in \cals(\reals)$. Therefore, we conclude that
\[
\sum_jk^j(a_j(x,p)E_k(x,ip)+b_j(x,-p)E_{k}(x,-ip))=0,
\]
for any $x,p\in \reals$. If we consider the above equation at $k=0$,
then
\[
a_0(x,p)\exp(ixp)+b_0(x,-p)\exp(-ixp)=0.
\]
From the above equation, we have that
\[
\partial_xa(x,p)
b(x,-p)-a(x,p)\partial_xb(x,-p)=-2ip a(x,p)b(x,-p).
\]
By comparing
the leading terms on both sides, we can quickly
conclude that $a_0=b_0=0$. By induction, we conclude that
$a_j=b_j=0$ for $j=0,\dots,n$.
\end{proof}

To motivate the main result of this section, we introduce the
following notion of a $\gamma$-local operator. (Recall that $\gamma$
acts on $\reals$ by reflection.)
\begin{definition}\label{dfn:e-operator}A linear operator $D$ on
$C^\infty(\reals^2)$ is called $\gamma$-local if for any $f\in
C^\infty(\reals^2)$, $D(f)(x,p)$ is determined completed by finitely
many jets of $f$ at $(x,p)$, $(-x,p)$, $(x,-p)$, and $(-x,-p)$. In
general, a $k$-linear operator $D$ on $C^\infty(\reals^2)$ is called
$\gamma$-local, if for any $f_1,\dots, f_k \in C^\infty(\reals^2)$,
$D(f_1, \dots, f_k)(x,p)$ is determined by finitely many jets of
$f_1, \dots, f_k$ at $(x,p)$, $(-x,p)$, $(x,-p)$, and $(-x,-p)$.
\end{definition}

\begin{example}Let us list some examples of $\gamma$-local operators.
\label {ex:e-local}
\begin{enumerate}
\item Differential operators on $\reals$ are
$\gamma$-local.
\item The partial translation operator $\sigma_i:C^\infty(\reals^2)\to C^\infty(\reals^2)$ for $i=1,2$ with
$\sigma_1(f)(x,p):=f(-x,p)$ and $\sigma_2(f)(x,p)=f(x-p)$ are
$\gamma$-local.
\item The difference operators $\tilde{\partial}_x, \tilde{\partial}_p:C^\infty(\reals^2)\to
C^\infty(\reals^2)$ with
$\tilde{\partial}_x(f)(x,p)=(f(x,p)-f(-x,p))/x$ and
$\tilde{\partial}_p(f)(x,p)=(f(x,p)-f(x,-p))/p$ are $\gamma$-local.
We observe that $\partial_x+\tilde{\partial}_x$
(and $\partial_p+\tilde{\partial}_p$)is the Dunkl operator $T_1$ acting
on the $x$-variable (and the $p$-variable), and is also $\Gamma$-local.
\end{enumerate}
\end{example}

\begin{proposition}
\label{prop:e-local-alg}The space of $\gamma$-local operators on
$C^\infty(\reals^2)$ is an associative algebra under composition.
\end{proposition}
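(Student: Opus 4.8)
The plan is to view $\gamma$-local operators as a subset of the associative algebra $\mathrm{End}\bigl(C^\infty(\reals^2)\bigr)$ --- here, as the word ``composition'' indicates, we mean \emph{linear} $\gamma$-local operators $C^\infty(\reals^2)\to C^\infty(\reals^2)$ --- and to reduce the statement to a single assertion: closure under composition. Write $j^N_{(y,q)}f$ for the $N$-jet $\bigl(\partial^\beta f(y,q)\bigr)_{|\beta|\le N}$ and set $S(x,p):=\{(x,p),(-x,p),(x,-p),(-x,-p)\}$. The easy parts are these: the identity operator is $\gamma$-local, since $\mathrm{Id}(f)(x,p)=f(x,p)$ depends only on $j^0_{(x,p)}f$; a linear combination $aD_1+bD_2$ of $\gamma$-local operators is $\gamma$-local, because $(aD_1+bD_2)(f)(x,p)$ is determined by the finitely many jets of $f$, at the points of $S(x,p)$, used by $D_1$ together with those used by $D_2$; and associativity is automatic, being that of composition of linear maps. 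The single structural observation that will drive the hard part is that $S(x,p)$ is stable under each of the reflections $x\mapsto-x$ and $p\mapsto-p$, hence $S(x',p')=S(x,p)$ for every $(x',p')\in S(x,p)$ --- whether or not the four points are distinct; in other words $S$ is a well-defined $\integers_2\times\integers_2$-orbit attached to any one of its members.

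For closure under composition I would argue as follows. Given $\gamma$-local $D_1,D_2$, write $(D_1\circ D_2)(f)(x,p)=D_1(g)(x,p)$ with $g:=D_2f$; by $\gamma$-locality of $D_1$ this is determined by finitely many jets of $g$, say by $j^{N_1}_{(x',p')}g$ for $(x',p')\in S(x,p)$. The key step is then the following claim about one $\gamma$-local operator $D$: if $(Df)(y,q)$ is determined by the $N$-jets of $f$ at the points of $S(y,q)$ (with $N$ bounded on compact sets), then for every multi-index $\alpha$ the derivative $\partial^\alpha(Df)(y,q)$ is determined by the $(N+|\alpha|)$-jets of $f$ at the points of $S(y,q)$. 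Applying this claim to $D_2$, one gets that $j^{N_1}_{(x',p')}(D_2f)$ is determined by the $(N_1+N_2)$-jets of $f$ at the points of $S(x',p')=S(x,p)$, and substituting back, $(D_1\circ D_2)(f)(x,p)$ is determined by the $(N_1+N_2)$-jets of $f$ at the four points of $S(x,p)$ --- so $D_1\circ D_2$ is $\gamma$-local. To prove the claim I would write $(Df)(y,q)=\Phi\bigl(y,q;\,j^N_{(y,q)}f,\,j^N_{(-y,q)}f,\,j^N_{(y,-q)}f,\,j^N_{(-y,-q)}f\bigr)$ for a fixed $\Phi$ and differentiate: by the chain rule $\partial^\alpha(Df)(y,q)$ is built from derivatives of $\Phi$ and from $\alpha$-derivatives of the jet-coordinate functions $(y,q)\mapsto\partial^\beta f(\epsilon_1 y,\epsilon_2 q)$, and since $\partial^\alpha\bigl[\partial^\beta f(\epsilon_1 y,\epsilon_2 q)\bigr]=\epsilon_1^{\alpha_1}\epsilon_2^{\alpha_2}\,\partial^{\beta+\alpha}f(\epsilon_1 y,\epsilon_2 q)$, only derivatives of $f$ of order $\le N+|\alpha|$, at the points of $S(y,q)$, intervene; this stays true over $\{x=0\}$ and $\{p=0\}$, where the base points of the four jets merely coincide in pairs or in a quadruple.

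The step I expect to be the real obstacle --- the only place where something beyond formal bookkeeping is needed --- is justifying that the determining function $\Phi$ is regular enough (say $C^{|\alpha|}$ in its jet arguments) for the chain rule above to apply, together with the analogous care at the collision loci $\{x=0\}$ and $\{p=0\}$. For the $\gamma$-local operators that actually occur in this paper --- differential operators, the partial reflections $\sigma_i$, the difference operators $\tilde\partial_x,\tilde\partial_p$, and arbitrary finite sums and composites of these --- $\Phi$ is plainly polynomial in the jet variables with smooth coefficients, so the claim is then immediate; in general I would invoke a Peetre/Boman-type theorem, to the effect that any linear operator $C^\infty(\reals^2)\to C^\infty(\reals^2)$ whose value at each point is determined by finitely many jets is automatically of this polynomial-in-jets shape. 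With that settled, the claim and hence the proposition follow.
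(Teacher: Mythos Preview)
Your argument is correct and far more detailed than the paper's own proof, which consists of the single sentence ``This is a straightforward check.'' The essential point you isolate --- that $S(x',p')=S(x,p)$ for every $(x',p')\in S(x,p)$, so the four-point orbit does not grow under iteration --- is exactly the content of that check, and once this is said the closure under composition is clear for the class of operators the paper actually uses.

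Your worry about the regularity of $\Phi$ (and the appeal to a Peetre-type theorem) is legitimate if one reads Definition~\ref{dfn:e-operator} in maximal generality, but it goes well beyond what the paper intends or needs: every $\gamma$-local operator that appears in the paper is a finite composite of differential operators, the reflections $\sigma_i$, and the difference operators $\tilde\partial_x,\tilde\partial_p$, for which the determining function is visibly polynomial in the jet variables with smooth coefficients. In that regime your chain-rule step is immediate, and the proposition follows. So your proof is sound; the only difference from the paper is that you have written out, and carefully qualified, what the authors left implicit.
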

\begin{proof}
This is a straightforward check.
\end{proof}

The main result of this section can be summarized into the following
Theorem.
\begin{theorem}\label{thm:symbol-cal}There are 2 families of
$\gamma$-local bilinear operators $C^1_{j,l}$ and $C^2_{j,l}$ on
$C^\infty(\reals^2)$ satisfying
\begin{enumerate}
\item For two polynomials $a_1$ and $a_2$ of degrees $(m_1,n_1)$ and $(m_2,n_2)$,
$C^0_{j,l}(a_1,a_2)$ and $C^1_{j,l}(a_1,a_2)$ are again polynomials
of degree $(m_1+m_2-j, n_1+n_2-j)$.
\item $C^0_{j,l}$ and $C^1_{j,l}$ vanish when $l>j$.
\item For two polynomials $a_1(x,p)$ and $a_2(x,p)$,
\[
\op_k(a_1)\circ \op_k(a_2)=\sum_{j,l}k^l\Big(
\op_k\big(C^0_{j,l}(a_1,a_2)\big)+\op_k\big(C^1_{j,l}(a_1,a_2)\big)\circ
\hat{\gamma}\Big).
\]
We observe that for any given $a_1,a_2$, the above sum is actually
finite and therefore well defined.
\end{enumerate}
\end{theorem}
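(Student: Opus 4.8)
The plan is to compute the composition $\op_k(a_1)\circ\op_k(a_2)$ directly from Definition \ref{dfn:pseud-op} and extract the operators $C^0_{j,l}$, $C^1_{j,l}$ from the resulting expression. Writing out the definition, $\op_k(a_2)(f)(y)=\int_\reals a_2(y,q)E_k(y,iq)\calf_k(f)(q)\,d\mu_k(q)$, and then applying $\op_k(a_1)$ means first computing $\calf_k\bigl(\op_k(a_2)(f)\bigr)(p)=\int_\reals E_k(y,-ip)\,\op_k(a_2)(f)(y)\,d\mu_k(y)$. The key analytic input is the product formula / translation structure for the Dunkl kernel, i.e. an identity expressing $E_k(x,ip)E_k(y,iq)$ or $E_k(y,-ip)E_k(y,iq)$ in a form that, after integrating in $y$ against $d\mu_k(y)$, reproduces a Dunkl transform; the upshot (this is the content behind Dachraoui's calculus) is that $\calf_k\bigl(\op_k(a_2)(f)\bigr)(p)$ is again of the form $\int \sigma(p,q)\calf_k(f)(q)\,d\mu_k(q)$ for a symbol $\sigma$ built from $a_2$ by an explicit integral operator. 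Feeding this back into the definition of $\op_k(a_1)$ yields $\op_k(a_1)\circ\op_k(a_2)=\op_k(a_1\#a_2)$ for a composed symbol $a_1\#a_2$, except that the presence of the $\integers_2$-reflection in $E_k$ forces $a_1\#a_2$ to split into a ``diagonal'' part and a part that must be read off against $\hat\gamma$ — because $\calf_k(\hat\gamma f)(p)=\calf_k(f)(-p)$, exactly as used in Lemma \ref{lem:uniqueness}, the reflected terms $E_k(x,-ip)$ reassemble into $\op_k(\,\cdot\,)\circ\hat\gamma$.

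Next I would make the expansion explicit. The Dunkl kernel $E_k(x,ip)=j_{k-1/2}(ipx)+\tfrac{ipx}{2k+1}j_{k+1/2}(ipx)$ and, more to the point, the composition kernel is a function of $k$ that is analytic at $k=0$ where it degenerates to the ordinary exponential $e^{ixp}$. So I would Taylor-expand everything in powers of $k$: the coefficient of $k^l$ produces the operators indexed by $l$. Within each order, the dependence on $a_1,a_2$ comes through derivatives and finite-difference operators (the $\tilde\partial_x$, $\tilde\partial_p$ of Example \ref{ex:e-local}, which are precisely the ``remainder'' in $T_k=\partial+k\tilde\partial$), evaluated at the four points $(\pm x,\pm p)$ — this is why the resulting $C^i_{j,l}$ are automatically $\gamma$-local in the sense of Definition \ref{dfn:e-operator}. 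The index $j$ records the total order of differentiation/difference; for polynomial inputs only finitely many $(j,l)$ contribute, giving the finiteness in item (3). Item (1), the bidegree count $(m_1+m_2-j,n_1+n_2-j)$, follows because each unit of $j$ is one derivative or one difference operator in $x$ paired with one in $p$ (the Moyal-type pairing of the $x$-symbol of one factor against the $p$-symbol of the other), each lowering bidegree by $(1,1)$; and when $k=0$ this is exactly the classical Moyal expansion, so the $l=0$ terms reproduce the standard star product. Item (2), vanishing of $C^i_{j,l}$ for $l>j$, is a bookkeeping consequence of the fact that each factor of $k$ in the expansion is accompanied by at least one difference operator, hence contributes at least one unit to $j$.

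Finally, uniqueness of the decomposition in item (3) is guaranteed by Lemma \ref{lem:uniqueness}, so the $C^i_{j,l}$ are well defined by the stated identity once existence is established. The main obstacle, I expect, is the Dunkl composition/product formula itself: unlike the classical case, $E_k(x,ip)E_k(x,iq)$ is not simply $E_k(x,i(p+q))$, and one needs the correct generalized translation (which for rank-one Dunkl involves an explicit integral kernel on an interval, cf. Rösler) to carry out the $y$-integration and see that the answer is again in the image of $\op_k$. Handling this, keeping track of the reflection terms cleanly, and verifying the $k$-analyticity and degree bounds of the resulting kernel is the technical heart; the excerpt signals this by deferring the full proof to Section 5 via ``series expansions of pseudodifferential calculus and explicit computations.'' I would organize the computation so that the classical ($k=0$) Moyal calculation is visibly the leading term, and treat the $k>0$ corrections as a perturbation expansion in the difference operators $\tilde\partial_x,\tilde\partial_p$.
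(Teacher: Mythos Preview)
Your high-level picture is right, but the route you propose --- compute the composition kernel via a product/translation formula for $E_k$, then Taylor-expand everything in $k$ --- is not how the paper proceeds, and the ``main obstacle'' you identify is a red herring. The paper never invokes a Dunkl translation formula. Its key move is instead to Taylor-expand $a_1(x,p)$ in the variable $p$ about $p=0$, writing $a_1(x,p)=\sum_\alpha \tfrac{p^\alpha}{\alpha!}\partial_p^\alpha a_1(x,0)$, and then to eliminate each power of $p$ from the integrand using the eigenvalue identity $p\,E_k(y,-ip)=iT_k E_k(y,-ip)$ together with the skew-adjointness of $T_k$ with respect to $d\mu_k$. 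This transfers $T_k$ onto the factor $E_k(y,ip_1)a_2(y,p_1)$, where a Leibniz-type identity for $T_k$ on a product yields exactly three pieces: multiplication by $p_1$, a $\partial_y$, and a $k\tilde\partial_y$ accompanied by a reflection. Iterating gives the closed formula
\[
\op_k(a_1)\circ\op_k(a_2)=\op_k\Bigl(\sum_\alpha\tfrac{1}{\alpha!}\partial_p^\alpha a_1(x,0)\,[p_1-i\partial_y-ik\,\sigma_2\tilde\partial_y\hat\gamma]^\alpha a_2(y,p_1)\big|_{y=x}\Bigr),
\]
after which the $p$-integration (not the $y$-integration) collapses via Dunkl inversion to the evaluation $y=x$. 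No product formula for $E_k$ is needed anywhere.

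The second step, which your proposal does not anticipate, is the combinatorics of expanding $[p_1+A+B]^\alpha$ with $A=-i\partial_y$ commuting with $p_1$ and $B=-ik\sigma_2\tilde\partial_y\hat\gamma$ anticommuting with $p_1$. The power of $k$ (the index $l$) is simply the number of $B$-factors, and $j$ is the total number of $A$'s plus $B$'s; the vanishing for $l>j$ and the bidegree statement then fall out mechanically. The paper counts the resulting monomials with the generating functions $1/(1-x)^{s}(1+x)^{t}$ and recognizes the remaining sums over $\alpha$ as iterates of the divided-difference coproduct $\Delta(f)(u,v)=(f(u)-f(v))/(u-v)$ evaluated at copies of $\pm p_1$ --- this is precisely the operator $A_\nu$ of Section~\ref{subsec:bilinear-operators}. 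Your plan to Taylor-expand the kernel itself in $k$ would instead leave you unwinding the $k$-dependence of the Bessel functions inside $E_k$, which is harder and, as the paper shows, unnecessary.
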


The proof of this theorem will be given in Section
\ref{sec:proof-thm}. In the left of this section, we will provide an
explicit formula for each bilinear operator $C^i_{jl}$. In
particular, when $l=0$, $C^1_{j,0}$ vanishes and $C^0_{j,0}$ is the
$j$-th component of the Moyal product,
\begin{equation}\label{eq:moyal}
C^1_{j,0}(a_1,
a_2)=\frac{(-i)^j}{j!}\partial_p^j(a_1)\partial_x^j(a_2).
\end{equation}

From this, we can see that the above operator-symbol calculus
defines a deformation of the crossed production of $D(\reals)\rtimes
\integers_2$.
\subsection{A coproduct structure on $\operatorname{Poly}(\reals)$}

We consider a coproduct structure on the algebra of polynomials of
one variable, which is useful in describing the operators
$C^i_{jl}$.

Define $\Delta$ to be a linear map from $\operatorname{Poly}(\reals)$ to
$\operatorname{Poly}(\reals)\otimes_\complex \operatorname{Poly}(\reals)$ by
\[
\Delta(f)(x,y):=\frac{f(x)-f(y)}{x-y}.
\]

Observe that $f(x)-f(y)$ is divisible by $x-y$, and therefore
$\Delta$ is well defined.

The following is a list of properties of the operator $\Delta$,
which can be checked routinely.

\begin{proposition}
\label{prop:delta}The operator
$\Delta:\operatorname{Poly}(\reals)\to
\operatorname{Poly}(\reals)\otimes \operatorname{Poly}(\reals)$
satisfies the following properties.
\begin{enumerate}
\item coassociative, i.e.
\[
(\Delta\otimes 1)\Delta=(1\otimes \Delta)\Delta: \operatorname{Poly}(\reals)\to
\operatorname{Poly}(\reals)\otimes \operatorname{Poly}(\reals)\otimes \operatorname{Poly}(\reals);
\]
\item Leibnitz rule, i.e.
\[
\Delta(fg)=(f\otimes 1)\Delta(g)+\Delta(f)(1\otimes g);
\]
\item $\Delta(f)(x,x)=f'(x)=D(f)(x)$, and
$\Delta(f)(x,-x)=(f(x)-f(-x))/2x=1/2\tilde{D}(f)(x)$, and
$T_k(f)(x)=(D+k\tilde{D})(f)(x)$;
\item $\Delta(f)$ is a symmetric function of $2$ variables;
\item $\Delta$ extends to be a linear map $\Delta:C^\infty(\reals)\to
C^\infty(\reals)\hat{\otimes }C^\infty(\reals)$ satisfying the same
properties (1)-(4), where $\hat{\otimes}$ is the complete
topological tensor product.
\end{enumerate}
\end{proposition}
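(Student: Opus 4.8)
The plan is to treat each of the five assertions as a routine but instructive computation, reducing everything to the defining formula $\Delta(f)(x,y)=(f(x)-f(y))/(x-y)$ and, where convenient, to the monomial basis.

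For the coassociativity in (1), I would write out both sides on a test polynomial. Applying $\Delta$ in the first slot gives
\[
(\Delta\otimes 1)\Delta(f)(x,y,z)=\frac{\tfrac{f(x)-f(z)}{x-z}-\tfrac{f(y)-f(z)}{y-z}}{x-y},
\]
and a direct simplification of the nested difference quotients yields the fully symmetric expression
\[
\frac{f(x)}{(x-y)(x-z)}+\frac{f(y)}{(y-x)(y-z)}+\frac{f(z)}{(z-x)(z-y)},
\]
which is manifestly invariant under permuting $x,y,z$; computing $(1\otimes\Delta)\Delta(f)$ the same way gives the same thing, so the two agree. (This is the classical divided-difference identity, and the formula above is the second divided difference $f[x,y,z]$.) For the Leibniz rule (2), I would simply expand: $\Delta(fg)(x,y)=\big(f(x)g(x)-f(y)g(y)\big)/(x-y)$, and add and subtract $f(x)g(y)$ in the numerator, which splits it as $f(x)\cdot\Delta(g)(x,y)+\Delta(f)(x,y)\cdot g(y)$, i.e. exactly $(f\otimes 1)\Delta(g)+\Delta(f)(1\otimes g)$. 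For (3), the value on the diagonal $y=x$ is the limit of the difference quotient, which is $f'(x)=D(f)(x)$; setting $y=-x$ gives $(f(x)-f(-x))/(2x)=\tfrac12\tilde D(f)(x)$ by the definition of $\tilde D$; and then $T_k(f)=Df+k\tilde D f$ matches the formula for the Dunkl operator $T_k$ given in Section 2 (with the difference term $k(f(x)-f(-x))/x = k\tilde D f$). For (4), symmetry is immediate: the numerator $f(x)-f(y)$ and the denominator $x-y$ each change sign under swapping $x\leftrightarrow y$, so the quotient is unchanged.

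The only assertion requiring genuine analysis rather than algebra is (5), the extension to $C^\infty(\reals)$. Here the key point is to exhibit $\Delta(f)$ as a smooth function of two variables, not merely as a pointwise-defined quotient with a removable singularity along the diagonal. The standard device is the integral representation
\[
\Delta(f)(x,y)=\int_0^1 f'\big(y+t(x-y)\big)\,dt,
\]
which manifestly extends the polynomial formula (for polynomials the two agree, since both are continuous and agree off the diagonal), defines a $C^\infty$ function of $(x,y)$ by differentiation under the integral sign, and lands in the completed tensor product $C^\infty(\reals)\hat\otimes C^\infty(\reals)$. Properties (1)--(4) then pass to the extension: (3) and (4) are visible directly from the integral formula (at $y=x$ the integrand is constantly $f'(x)$; symmetry follows from the change of variables $t\mapsto 1-t$), and (1)--(2) follow either by the same algebraic manipulations carried out on the integral representation or, more cheaply, by density of polynomials in $C^\infty$ in the relevant topology together with continuity of all the operations involved.

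I expect the main obstacle — such as it is — to be purely expository: organizing assertion (1) so that the symmetric three-point formula is derived cleanly without a messy common-denominator computation, and being careful in (5) that "satisfying the same properties (1)--(4)" is justified by continuity/density rather than re-proved by hand. None of the steps is deep; the proposition is exactly the kind of statement the paper flags as checkable "routinely," and the integral representation in (5) is the one ingredient worth writing down explicitly.
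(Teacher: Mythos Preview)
Your proposal is correct in every part, and in fact goes well beyond what the paper offers: the paper gives no proof of this proposition at all, merely prefacing it with the remark that the properties ``can be checked routinely.'' Your argument is exactly the routine verification the authors had in mind for (1)--(4); the integral representation $\Delta(f)(x,y)=\int_0^1 f'(y+t(x-y))\,dt$ you supply for (5) is the natural way to make the smooth extension precise and is a genuine addition, since the paper does not indicate how the extension is to be carried out.
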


\begin{remark}
According to Proposition \ref{prop:delta}, (2), the operator
$\Delta$ is a Hochschild cocycle of $\operatorname{Poly}(\reals)$
with coefficient in $\operatorname{Poly}(\reals)\otimes
\operatorname{Poly}(\reals)$. By the Koszul complex, we can compute
that the Hochschild cohomology $H^1(\operatorname{Poly}(\reals),
\operatorname{Poly}(\reals)^{\otimes 2})$ is equal to
$\operatorname{Poly}(\reals)$. Under this identification, $\Delta$
is mapped to the unit of $\operatorname{Poly}(\reals)$.
\end{remark}

\begin{remark}
For $\reals^n$, we can generalize $\Delta$ to a cocycle $\Delta_n:
\operatorname{Poly}(\reals^n)^{\otimes n}\to \operatorname{Poly}(\reals^n)^{\otimes 2}$ by
\begin{eqnarray*}
&\Delta_n(f_1, \dots, f_n)(x,y)\\
:=&\frac{(f_1(x_1, \dots, x_n)-f_1(y_1, x_2, \dots, x_n))(f_2(y_1,
x_2, \dots, x_n)-f_2(y_1, y_2, x_3, \cdots, x_n))\cdots (f_n(y_1,
\dots, y_{n-1},x_n)-f_n(y_1, \dots, y_n))}{(x_1-y_1)\cdots
(x_n-y_n)},
\end{eqnarray*}
where $x=(x_1, \dots, x_n)$ and $y=(y_1, \dots, y_n)$.
\end{remark}
\subsection{Formulas for asymptotic
expansion}\label{subsec:bilinear-operators}
We will give explicit expressions for $C^i_{j,l}$, $i=1,2$, which involves interesting combinatorics.

We start with considering the linear equation
\begin{equation}\label{eq:linear-partition}
y_0+y_1+\cdots + y_{l}=j-l.
\end{equation}
Let $P_{j-l, l}$ be the set of integer solutions to Equation
(\ref{eq:linear-partition}) where $y_0, y_l$ are nonnegative and
$y_1, \dots, y_{l-1}$ are positive.

Let $D(f)(x)=f'(x)$ and $\tilde{D}(f)(x)=(f(x)-f(-x))/x$.

For an element $\nu\in P_{m,n}$, define $B_\nu$ a linear operator on
$\operatorname{Poly}(\reals)$ by
\[
B_\nu(f)(x):=D^{y_n}\circ \tilde{D} \circ D^{y_{n-1}}\cdots
D^{y_1}\circ \tilde{D}\circ D^{y_0}(f)(x).
\]

For $n\in \mathbb{N}$, define $n_0$ to be the number of positive even numbers
less than or equal to $n$, and $n_1$ to be the number of positive odd numbers
less than or equal to $n$. Obviously, $n=n_0+n_1$. Given $\nu\in
P_{m,n}$, we define $\Lambda_0=y_0+\sum_{\text{even}\ i }y_i$, and
$\Lambda_1=\sum_{ \text{odd}\ i}y_i$. We have
$\Lambda_0+\Lambda_1=m$. Define $A_\nu$ a linear operator on
$\operatorname{Poly}(\reals)$ by
\[
\Delta^{m+n}(f)(\underbrace{x, \dots, x}_{\Lambda_0+n_0+1},
\underbrace{-x, \dots, -x}_{\Lambda_1+n_1}).
\]
By the associativity of $\Delta$ (Prop.
\ref{prop:delta}, (1)), define $\Delta^k(f)=(\Delta\otimes 1\otimes
\cdots \otimes 1)\cdots (\Delta\otimes 1)\Delta(f)$. And according to Prop. \ref{prop:delta} (4), $\Delta^{k}(f)$ is a symmetric function of $k+1$ variables.

In order to define $C^i_{j,l}$, which are bilinear operators on
$\operatorname{Poly}(\reals^2)$, we lift $A_\nu$ and $B_{\nu}$ on $\operatorname{Poly}(\reals^2)$
by applying $A_{\nu}$ on the variable $p$ and $B_{\nu}$ on the
variable $x$.

Now we are ready to define $C^i_{j,l}$.
\begin{enumerate}
\item[I.] $C^0_{j,l}$. The bilinear operator $C^0_{j,l}$ vanishes if $l$ is odd, and when
$l$ is even,
\[
C^0_{j,l}(a_1,a_2):=(-i)^j\sum_{\nu \in P_{j-l,l}}A_{\nu}(a_1)(x,p)B_\nu(a_2)(x,p).
\]

\item[II.] $C^1_{j,l}$. The bilinear operator $C^1_{j,l}$ vanishes if $l$ is even, and when
$l$ is odd,
\[
C^1_{j,l}(a_1, a_2)(x,p):=(-i)^{j}\sum_{\nu\in P_{j-l,l}} A_\nu(a_1)(x,p)B_{\nu}(a_2)(x,-p).
\]
\end{enumerate}

We point out that with the expression of $C^i_{ij}$, Theorem
\ref{thm:symbol-cal}, (1) follows obviously by the definition of
$A_\nu$ and $B_\nu$. Furthermore, one notices that if $j-l<l-1$, then
$P_{j-l,l}$ is an empty set, and therefore $C^i_{j,l}$ vanishes.
This gives a stronger version of Theorem \ref{thm:symbol-cal}, (2).

From the above discussion, we are left to prove part (3) of Theorem
\ref{thm:symbol-cal}. This is an interesting application of
operator-symbol calculus  and the detail will be in Section
\ref{sec:proof-thm}. In particular, we will explain how we obtain
the operators $A_\nu$ and $B_\nu$.
\subsection{A ``Moyal" formula}
Motivated by the result of Theorem \ref{thm:symbol-cal}, we
introduce the following algebra.
\begin{definition}
\label{dfn:deformation}Define the following product $\star$ on
$C^\infty(\reals^2)\rtimes_{\complex} \integers_2[[\hbar_1,
\hbar_2]]$ by
\begin{enumerate}
\item $\star$ is $\complex[[\hbar_1, \hbar_2]]$ linear;
\item For $a_1, a_2\in C^\infty(\reals^2)$, $a_1\star a_2$ is
defined by
\[
a_1\star
a_2=\sum_{j,l}\hbar_1^j\hbar_2^l(C^0_{j,l}(a_1,a_2)+C^1_{j,l}(a_1,
a_2)\gamma).
\]
\end{enumerate}
\end{definition}

As we have explained at the end of Section \ref{subsec:operator},
when $\hbar_2=0$, the above product $\star$ reduces back the
standard Moyal product. Hence, we can view
$(C^\infty(\reals^2)\rtimes \integers_2[[\hbar_1, \hbar_2]], \star)$
as a deformation of the crossed product of the Weyl algebra
$\mathbb{W}_2$ with $\integers_2$. Furthermore, we point out that as
$C^i_{j,l}=0$ when $l>j$, we can allow $\hbar_2$ be a complex number
in $\complex$ rather than a formal parameter. In this way, we can
also view $(C^\infty(\reals^2)\rtimes \integers_2[[\hbar_1,
\hbar_2]], \star)$ as a formal deformation quantization of the
crossed product algebra $C^\infty(\reals^2)\rtimes \integers_2$
along the noncommutative Poisson structure $\pi+\hbar_2 \pi\gamma$
on $C^\infty(\reals^2)\rtimes \integers_2$ as we introduced in
\cite{H-T}.

\begin{lemma}\label{lem:poly-approx}For any $(x_0,p_0)\in \reals^2$, and $f\in
C^\infty(\reals^2)$, given $m,n\in \mathbb{N}\cup \{0\}$, there is a
polynomial $g_{m,n}\in \operatorname{Poly}(\reals^2)$ such that
$\partial_x^i\partial_p^jf$ agrees with
$\partial_x^i\partial_p^jg_{m,n}$ at $(x_0,p_0)$, $(x_0,-p_0)$,
$(-x_0,p_0)$, and $(-x_0,-p_0)$ for $0\leq i\leq m, 0\leq j\leq n$.
\end{lemma}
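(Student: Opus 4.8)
The plan is to prove this by an explicit polynomial interpolation argument using Taylor expansions at the four points, exploiting that the four points come in $\gamma$-orbits and are (generically) distinct.

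First I would treat the generic case where $x_0\neq 0$ and $p_0\neq 0$, so that the four points $(\pm x_0,\pm p_0)$ are pairwise distinct. In that case I would simply take $g_{m,n}$ to be a suitable Hermite-type interpolation polynomial: for each of the four points $(\epsilon x_0,\delta p_0)$ with $\epsilon,\delta\in\{+1,-1\}$ I want $g_{m,n}$ to match the prescribed jet data, i.e.\ the values $\partial_x^i\partial_p^j f(\epsilon x_0,\delta p_0)$ for $0\le i\le m$, $0\le j\le n$. Since Hermite interpolation at finitely many distinct nodes in one variable always has a polynomial solution, and the two-dimensional problem over the product grid $\{\pm x_0\}\times\{\pm p_0\}$ factors as a tensor product of two such one-variable problems, one can write down $g_{m,n}$ directly as a linear combination of products $h_{\epsilon}(x)\,\ell_{\delta}(p)$ of one-variable Hermite basis polynomials times the appropriate Taylor polynomials of $f$ at each corner. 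Concretely, I would first take the degree-$(m,n)$ Taylor polynomial $T_{\epsilon,\delta}(x,p)$ of $f$ at $(\epsilon x_0,\delta p_0)$, then multiply by a two-variable bump polynomial $\chi_{\epsilon,\delta}(x,p)$ that equals $1$ to order $(m,n)$ at its own corner and vanishes to order $(m+1,n+1)$ at the other three corners, and set $g_{m,n}=\sum_{\epsilon,\delta}\chi_{\epsilon,\delta}\,T_{\epsilon,\delta}$; the $\chi_{\epsilon,\delta}$ are built from powers of $(x\mp x_0)$ and $(p\mp p_0)$ in the standard way.

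The remaining step is to handle the degenerate cases $x_0=0$ or $p_0=0$, where some of the four points coincide. If $x_0=0$ the four points collapse to the two points $(0,p_0),(0,-p_0)$ (further to the single point $(0,0)$ if also $p_0=0$), and the required jet data at, say, $(x_0,p_0)$ and $(-x_0,p_0)$ is literally the same data at the same point, so the interpolation problem is consistent and in fact \emph{easier} — it is just a Hermite interpolation at one or two distinct points, which again admits a polynomial solution by the same tensor-product construction with fewer corners. Throughout, I would note that only finitely many derivatives are constrained, so finite-degree polynomials suffice; and that the statement demands nothing about values away from the four points, so no uniformity or analyticity of $f$ is needed.

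The main obstacle is essentially bookkeeping rather than a genuine difficulty: one must make sure the four (or two, or one) local Taylor polynomials can be patched without the bump factors $\chi_{\epsilon,\delta}$ destroying the matching of the first $m$ (resp.\ $n$) derivatives at the home corner. This is arranged by choosing each $\chi_{\epsilon,\delta}$ to agree with $1$ to order $(m,n)$ at its own corner, which forces its vanishing orders at the other corners to be at least $m+1$ in $x$ and $n+1$ in $p$ — automatically true for the product-of-powers construction since $x_0\neq -x_0$ (in the generic case). Care is only needed to state things so that the degenerate cases are not secretly inconsistent, but as noted they are consistent because the data is pulled back from a function $f$, so the values at coincident points automatically agree. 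Hence the lemma follows.
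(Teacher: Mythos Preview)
Your proposal is correct and is essentially the same idea as the paper's proof: both are polynomial jet interpolation at the (at most) four points $(\pm x_0,\pm p_0)$, with the degenerate cases handled by observing that coincident points impose no new constraints. The organization differs slightly: the paper proceeds in a Newton-form style, starting from the Taylor polynomial at $(x_0,p_0)$ and adding successive correction terms $(x-x_0)^{m+1}g_1$, $(p-p_0)^{n+1}g_2$, $(x-x_0)^{m+1}(p-p_0)^{n+1}g_3$, solving explicit upper-triangular linear systems for the coefficients at each step; you instead invoke the Lagrange-form Hermite basis and the tensor-product structure of the grid $\{\pm x_0\}\times\{\pm p_0\}$ to write $g_{m,n}$ as a symmetric sum $\sum_{\epsilon,\delta}\chi_{\epsilon,\delta}T_{\epsilon,\delta}$. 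Your route is a bit cleaner and avoids the case-by-case linear algebra, at the cost of appealing to one-variable Hermite interpolation as a black box; the paper's route is more self-contained but heavier in bookkeeping. Either way the content is the same.
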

\begin{proof}  We divide our proofs into 4 different
situations according to $(x_0, p_0)$.
\begin{enumerate}
\item $x_0=p_0=0$,
\item $x_0\ne 0$ and $p_0=0$,
\item $x_0=0$ and $p_0\ne 0$,
\item $x_0\ne 0$ and $p_0\ne 0$.
\end{enumerate}

\noindent{\bf Case (1)}. For any $m,n\in \naturals$, define
\[
g_{m,n}=\sum_{0\leq i\leq m, 0\leq j\leq
n}\frac{1}{i!j!}\partial_x^i\partial^j_p(f)(0,0)x^ip^j.
\]
It is easy to check $\partial_x^i\partial^j_p g_{m,n}$ agrees
$\partial_x^i\partial_p^jf$ at $(0,0)$ for $0\leq i\leq m, 0\leq
j\leq n$.

\noindent{\bf Case (2) and (3)}. The proof for these two cases are
exactly same. Therefore, we will only prove Case (2). Define
\[
g_1=\sum_{0\leq i\leq m, 0\leq j\leq
n}\frac{1}{i!j!}\partial_x^i\partial^j_p(f)(x_0,0)(x-x_0)^ip^j.
\]
Define $g_{m,n}=g_1+(x-x_0)^{m+1}g_2$ where $g_2$ is some polynomial
to be determined. It is easy to check that
$\partial_x^i\partial^j_pg_{m,n}(x_0,0)$ agrees with
$\partial^i_x\partial^j_pf(x_0,0)$. We proceed to look for $g_2$
such that $\partial_x^i\partial_p^jg_{m,n}(-x_0,0)$ agrees with
$\partial_x^i\partial_p^jf(-x_0,0)$. We write
\[
g_2=\sum_{1\leq s\leq m,1\leq t\leq n}1/{s!t!}a_{st}(x+x_0)^sp^t.
\]
We need to solve $a_{st}$. From the requirement that
$\partial_x^i\partial_p^jg_{m,n}(-x_0,0)=\partial_x^i\partial_p^jf(-x_0,0)$,
we know that
\begin{equation}\label{eq:x-0}
\partial_x^i\partial_p^j(g_1)(-x_0,0)+\left(\begin{array}{c}i\\
k\end{array}\right)\partial_x^{i-k}(x-x_0)^{m+1}
\partial_x^{k}\partial_p^jg_2(-x_0,0)=\partial_x^i\partial_p^jf(-x_0,0).
\end{equation}
If we order $a_{st}$ lexicographically, then it is not difficult to
see that the above equations for $1\leq i\leq m, 1\leq j\leq n$
define a system of linear equations for variable $a_{st}$. We notice
that in Eq. (\ref{eq:x-0}), the leading term is $a_{ij}$ with
coefficient $(-2x_0)^{m+1}$. When $i$ and $j$ vary, we have a system
of linear equations whose coefficient matrix is an upper triangular
matrix with a nonzero number $(-2x_0)^{m+1}$ at every entry of the
diagonal. This implies that we have a unique solution for $a_{st}$,
and therefore a solution for $g_{m,n}$.

\noindent{\bf Case (4)}. Following the proof of Case (2),
we construct $g$ step by step. Firstly, define $g_0$ to be
\[
g_0=\sum_{0\leq i\leq m, 0\leq j\leq
n}\frac{1}{i!j!}\partial_x^i\partial_p^jf(x_0,
y_0)(x-x_0)^i(p-p_0)^j.
\]
We now look for $g_1$ of the form $\sum_{0\leq i\leq m, 0\leq j\leq
n}1/i!j! a_{ij}(x+x_0)^i(p-p_0)^j$ such that
$\partial_x^i\partial_p^j(g_0+(x-x_0)^{m+1}g_1)$ agrees with
$\partial_x^i\partial_p^jf$ at both $(x_0,p_0)$ and $(-x_0,p_0)$ for
$0\leq i\leq m, 0\leq j\leq n$. We notice that it is always true
that
$\partial_x^i\partial_p^j(g_0+(x-x_0)^{m+1}g_1)(x_0,p_0)=\partial_x^i\partial_p^jf(x_0,p_0)$
for $0\leq i\leq m, 0\leq j\leq n$. By the same arguments as in the
proof of Case (2), we can find a unique family $a_{ij}$ such that
$\partial_x^i\partial_p^j(g_0+(x-x_0)^{m+1}g_1)(-x_0,p_0)$ is same
to $\partial_x^i\partial_p^j(f)(-x_0,p_0)$ for $0\leq i\leq m, 0\leq
j\leq n$.

We next look for $g_2$ of the form $\sum_{0\leq i\leq m, 0\leq j\leq
n}1/i!j! b_{ij}(x-x_0)^i(p+p_0)^j$ such that
$\partial_x^i\partial_p^j(g_0+(x-x_0)^{m+1}g_1+(p-p_0)^{n+1}g_2)$
agrees with $\partial_x^i\partial_p^jf$ at $(x_0, p_0)$, $(-x_0,
p_0)$, $(x_0, -p_0)$ for $0\leq i\leq m, 0\leq j\leq n$. Again, it
not difficult to check that the partial derivatives of these two
functions agree at $(x_0, p_0)$ and $(-x_0, p_0)$ no matter what
$g_2$ is like. With the above arguments, we know that there exists a
unique solution for $b_{st}$ such that the derivatives of the two
functions agree at $(x_0,-p_0)$.

Continuing the above procedure, we look for $g_3$ of the form
$\sum_{0\leq i\leq m, 0\leq j\leq n}1/i!j!
c_{ij}(x+x_0)^i(p+p_0)^j)$ such that
$\partial_x^i\partial_p^j(g_0+(x-x_0)^{m+1}g_1+(p-p_0)^{n+1}g_2+(x-x_0)^{m+1}(p-p_0)^{n+1}g_3)$
agrees with $\partial_x^i\partial_p^jf$ at $(x_0, p_0), (-x_0, p_0),
(x_0, p_0), (-x_0, -p_0)$ for $0\leq i\leq m, 0\leq j\leq n$. Again
the two functions have the same derivatives at $(x_0, p_0),
(-x_0,p_0), (x_0, -p_0)$ no matter what $g_3$ is like. The same
arguments as in the proof of Case (2) shows that there is a unique
solution for $c_{ij}$.

In summary, we have fund a function
$g_{m,n}=g_0+(x-x_0)^{m+1}g_1+(p-p_0)^{n+1}g_2+(x-x_0)^{m+1}(p-p_0)^{n+1}g_3$
such that $\partial_x^i\partial_p^jf$ agrees with
$\partial_x^i\partial_p^jg_{m,n}$ at $(x_0, p_0)$, $(-x_0, p_0)$,
$(x_0,-p_0)$, and $(-x_0, -p_0)$ for $0\leq i\leq m, 0\leq j\leq n$.
\end{proof}
\begin{proposition}
\label{prop:assoc-poly}The product $\star$ is associative on
$C^\infty(\reals^2)\rtimes \integers_2[[\hbar_1, \hbar_2]]$. For
$e^{i\theta}\in U(1)$, the map $x\mapsto e^{i\theta}x, p\mapsto
e^{-i\theta}p$ defines a $U(1)$ action on the algebra
$(C^\infty(\reals^2)\rtimes \integers_2[[\hbar_1, \hbar_2]],\star)$
\end{proposition}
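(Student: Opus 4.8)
The plan is to reduce associativity on all of $C^\infty(\reals^2)\rtimes\integers_2[[\hbar_1,\hbar_2]]$ to associativity on the polynomial subalgebra $\operatorname{Poly}(\reals^2)\rtimes\integers_2$, and then to prove the latter by the operator-symbol calculus of Theorem \ref{thm:symbol-cal}. First I would observe that, by $\complex[[\hbar_1,\hbar_2]]$-linearity and the crossed-product relations, it suffices to verify $(a_1\star a_2)\star a_3=a_1\star(a_2\star a_3)$ for $a_1,a_2,a_3\in C^\infty(\reals^2)$; the $\gamma$-twisting is bookkept exactly as in the semidirect product, using that $C^0_{j,l}$ is ``even'' and $C^1_{j,l}$ is ``odd'' in the $p$-variable, so that conjugating the second slot by $\hat\gamma$ matches the group multiplication in $\integers_2$. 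The associativity identity then becomes a family of bilinear-operator identities among the $C^i_{j,l}$, one for each total order $(\hbar_1^N\hbar_2^L)$, which must hold as identities of $\gamma$-local operators on $C^\infty(\reals^2)$.

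Next I would note that each such identity is \emph{$\gamma$-local}: by Proposition \ref{prop:e-local-alg}, any composition of the $C^i_{j,l}$ is again $\gamma$-local, so both sides of the order-$(N,L)$ associativity relation, applied to $(f_1,f_2,f_3)$ and evaluated at a point $(x_0,p_0)$, depend only on finitely many jets of the $f_i$ at the four points $(\pm x_0,\pm p_0)$. Now invoke Lemma \ref{lem:poly-approx}: given any $(x_0,p_0)$ and any jet-order bound, one can replace each $f_i$ by a polynomial $g_i$ agreeing with $f_i$ to that order at all four points. Hence the order-$(N,L)$ associativity relation holds at $(x_0,p_0)$ for arbitrary smooth inputs if and only if it holds for polynomial inputs, and this for every $(x_0,p_0)$. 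So it is enough to prove $\star$ associative on $\operatorname{Poly}(\reals^2)\rtimes\integers_2[[\hbar_1,\hbar_2]]$.

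On polynomials, associativity is immediate from Theorem \ref{thm:symbol-cal}(3) together with Lemma \ref{lem:uniqueness}: for polynomial $a_1,a_2,a_3$ the expressions $\op_k\big((a_1\star a_2)\star a_3\big)$ and $\op_k\big(a_1\star(a_2\star a_3)\big)$ — with $\hbar_1$ specialized to the role of the pseudodifferential expansion parameter and $\hbar_2$ to $k$ — are both equal to the thrice-iterated operator composition $\op_k(a_1)\circ\op_k(a_2)\circ\op_k(a_3)$, since composition of operators on $\cals(\reals)$ is honestly associative. Thus the two polynomials-plus-$\gamma$-twisted-polynomials representing the two bracketings have the same image under $a\mapsto\op_k(a)$ for all $k\ge 0$; Lemma \ref{lem:uniqueness} forces the bracketings to be equal coefficientwise in $\hbar_1,\hbar_2$. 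Finally, the $U(1)$-equivariance is a direct check: the rescaling $x\mapsto e^{i\theta}x$, $p\mapsto e^{-i\theta}p$ fixes the symplectic form $dx\wedge dp$ and commutes with $\gamma$, and inspecting the explicit formulas for $C^0_{j,l}$ and $C^1_{j,l}$ one sees that each application of $D$ (or $\partial_x$) on the $x$-slot carries weight $-1$ while each $\Delta$-derivative (or $\partial_p$) on the $p$-slot carries weight $+1$, and in every monomial of $C^i_{j,l}$ the number of $x$-type and $p$-type derivatives is equal (both are $j$), so each $C^i_{j,l}$ is $U(1)$-invariant; hence so is $\star$.

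The main obstacle is the reduction in the second paragraph done \emph{with care about uniformity}: Lemma \ref{lem:poly-approx} produces, for each fixed point and each jet order, a polynomial approximant, but the order-$(N,L)$ piece of associativity only needs jets up to a bound depending on $N,L$ (and on the fixed finite set of $C^i_{j,l}$ entering that order), not on the $f_i$. One must check that this bound is genuinely finite — which it is, since $C^i_{j,l}$ vanishes unless $j-l\ge l-1$, so only finitely many $(j,l)$ contribute at each total order, and each is a fixed finite-order $\gamma$-local bilinear differential-difference operator. Granting that, the passage from the polynomial identity to the smooth identity is automatic, and no separate analytic estimate on symbol classes is needed because we never leave the realm of polynomials where $\op_k$ and its compositions are defined termwise.
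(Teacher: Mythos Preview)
Your approach is essentially the paper's: reduce to polynomials via $\gamma$-locality plus Lemma \ref{lem:poly-approx}, then deduce the polynomial case from associativity of operator composition together with Theorem \ref{thm:symbol-cal}(3) and Lemma \ref{lem:uniqueness}; the $U(1)$-invariance is likewise checked by the weight count on $A_\nu,B_\nu$.

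There is, however, one point where your argument is not quite complete. Lemma \ref{lem:uniqueness} separates only the powers of $k$, i.e.\ the $\hbar_2$-coefficients; it says nothing about the $\hbar_1$-grading. When you specialize $\hbar_1\mapsto 1$, $\hbar_2\mapsto k$ and feed the difference of the two bracketings into $\op_k$, the vanishing you obtain is that of $\sum_j\big(\text{coeff.\ of }\hbar_1^j\hbar_2^l\big)$ for each $l$, not of each individual $\hbar_1^j\hbar_2^l$-coefficient. The paper closes this gap by working with \emph{monomials} $a_i$ of bidegree $(m_i,n_i)$ and observing (Theorem \ref{thm:symbol-cal}(1)) that the $\hbar_1^j$-piece of the triple product then has bidegree $(m_1{+}m_2{+}m_3{-}j,\,n_1{+}n_2{+}n_3{-}j)$; distinct $j$'s land in distinct homogeneous components, so the sum over $j$ vanishing forces each summand to vanish. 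Once you insert this degree-separation step before invoking Lemma \ref{lem:uniqueness}, your proof coincides with the paper's.
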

\begin{proof}
We observe that $\operatorname{Poly}(\reals^2)\rtimes \integers_z$ is closed under
$\star$. If $a_i$ ($i=1,2,3$) are monomials of degrees $(m_i,n_i)$,
then $\sum_{l}k^l(C^0_{j,l}(a_1, a_2)+C^1_{j,l}(a_1,a_2)\gamma)$ is
the degree $(m_1+m_2-j,n_1+n_2-j)$ in the expansion of
$\op_k(a_1)\circ\op_k(a_2)$. Therefore,
\begin{eqnarray*}
&\sum_{j_1+j_2=j}\sum_{l}k^{l}&\sum_{l_1+l_2=l}\Big(\op_k \big(C^0_{j_1,l_1}(C^0_{j_2,l_2}(a_1,a_2),a_3)
+C^1_{j_1,l_1}(C^1_{j_2,l_2}(a_1,a_2),\gamma(a_3))\big)\\
&&+\op_k \big(C^0_{j_1,l_1}(C^1_{j_2,l_2}(a_1,a_2),
\gamma(a_3))+C^1_{j_1,l_1}(C^0_{j_2,l_2}(a_1,a_2),a_3)\big)\gamma\Big)
\end{eqnarray*}
is the degree $(m_1+m_2+m_3-j,n_1+n_2+n_3-j)$ component of the
expansion of $(\op_k(a_1)\circ\op_k(a_2))\circ \op_k(a_3)$.

As the composition between operators on $\cals(\reals)$ is
associative, by Theorem \ref{thm:symbol-cal} and Lemma
\ref{lem:uniqueness}, we conclude that the product $\star$ on
$\operatorname{Poly}(\reals^2)\rtimes \integers_2$ is associative by comparing
components with degree $(m_1+m_2+m_3-j,n_1+n_2+n_3-j)$ and power
$k^l$ in the expansions of $(\op_k(a_1)\circ \op_k(a_2))\circ
\op_k(a_3)$ and $\op_k(a_1)\circ(\op_k(a_2)\circ \op_k(a_3))$.

To prove that $\star$ is associative on $C^\infty(\reals^2)\rtimes
\integers_2$, it is sufficient to check that $\star$ is associative
at every point $(x,p)$ up to any $\hbar_1^j\hbar_2^l$. We notice
that $C^i_{j,l}(a_1,a_2)(x,p)$ is determined by the values of
$a_1,\partial_pa_1, \dots,
\partial_p^ja_{1}$ at $(x,p)$ and $(x,-p)$, together with values of
$a_2, \partial_xa_2, \dots,
\partial_x^j a_{2}$ at $(x,p), (-x,p), (x,-p)$, and $(-x,-p)$.
Therefore to check $((a_1\star a_2)\star a_3)(x,p)$ agrees with
$(a_1\star (a_2\star a_3))(x,p)$ up to degree $\hbar_1^j\hbar_2^l$,
it sufficient to check $(b_1\star b_2)\star b_3(x,p)$ agrees with
$b_1\star (b_2\star b_3)(x,p)$ up to degree $\hbar_1^j\hbar_2^l$ for
polynomials $b_1,b_2,b_3$ where the values of
$\partial_x^s\partial_p^tb_i$ at $(x,p), (-x,p), (x,-p), (-x,-p)$
agree with the corresponding values of $\partial_x^s\partial_p^ta_i$
for $i=1,2,3, 1\leq s,t\leq j$. Hence by the associativity of
$\star$ on $\operatorname{Poly}(\reals^2)\rtimes \integers_2$ and
Lemma \ref{lem:poly-approx}, we conclude that $\star$ is associative
on $C^\infty(\reals^2)\rtimes\integers_2$.

For the action of $t=e^{i\theta}$, we notice that $t=e^{i\theta}$
acts on operators $A_\nu$ and $B_\nu$ with eigenvalues $t^{-j}$ and
$t^{j}$. Therefore, one can quickly check that $C^i_{jl}$ is a $U(1)$
invariant bilinear operator on $C^\infty(\reals^2)\rtimes
\integers_2[[\hbar_1, \hbar_2]]$ for any $i,j,l$. Therefore, $U(1)$
acts on $C^\infty(\reals^2)\rtimes \integers_2[[\hbar_1, \hbar_2]]$
by algebra automorphisms.
\end{proof}

\begin{remark}
The algebra $(C^\infty(\reals^2)\rtimes \integers_2[[\hbar_1,
\hbar_2]], \star)$ is the formal version of a symplectic
reflection algebra \cite{E-G} for $\integers_2$ action on the standard
symplectic vector space $\reals^2$. Theorem \ref{thm:symbol-cal}
gives an operator interpretation of this symplectic reflection
algebra and furthermore a Moyal type expansion formula.
\end{remark}

Let $P=(1+\gamma)/2\in C^\infty(\reals^2)\rtimes \integers_2$.
Consider the subspace of $A=(C^\infty(\reals^2)\rtimes
\integers_2[[\hbar_1, \hbar_2]],\star)$ defined by $P\star A\star
P=P\star C^\infty(\reals^2)\rtimes \integers_2[[\hbar_1, \hbar_2]]
\star P$. In \cite{E-G}, Etingof and Ginzburg proved that $P\star
A\star P$ is Morita equivalent to $A$. In particular, one can
quickly check that the space $P\star A\star P$ as a vector space is
isomorphic to $C^\infty(\reals^2)^{\integers_2}[[\hbar_1,
\hbar_2]]$. Via the natural identification,
\[
a\in C^\infty(\reals^2)^{\integers_2}[[\hbar_1, \hbar_2]]\mapsto
aP\in C^\infty(\reals^2)\rtimes \integers_2[[\hbar_1, \hbar_2]],
\]
$C^\infty(\reals^2)^{\integers_2}[[\hbar_1, \hbar_2]]$ is equipped
with a star-product which we will again denote by $\star$. We call
this algebra Dunkl-Weyl algebra $\mathbb{D}_2$, which is called the
spherical subalgebra by Etingof and Ginzburg \cite{E-G}. By
Proposition \ref{prop:assoc-poly}, we conclude that the Dunkl-Weyl
algebra $\mathbb{D}_2$ is an associative algebra with a natural
$U(1)$ action. \\

\noindent{\bf Notation:} We use $\complex((\hbar_1)) ((\hbar_2))$ to denote the space of all series of the form
\[
\sum_{t\leq j}a_j(\hbar_1)\hbar_2^j
\]
for some $t\in \integers$, and $a_j\in \complex((\hbar_1))$.
In the later applications, we many times will work
with the algebra $\mathbb{D}_2\otimes_{\complex[[\hbar_1,
\hbar_2]]}\complex((\hbar_1))((\hbar_2))$, which will be denoted by
$\mathbb{D}_2((\hbar_1))((\hbar_2))$.
\section{Quantization of $\mathbb{Z}_2$-orbifold}
In this section, we consider deformation quantization of
$\integers_2$-orbifolds. Let $M$ be a symplectic manifold with a
symplectic $\integers_2$ action. As $\integers_2$ is finite, we can
always find a $\integers_2$ invariant symplectic connection on $M$.
Using Fedosov's method, we can construct a $\integers_2$ invariant
star-product $\star$ on $C^\infty(M)[[\hbar]]$. (As our construction
is local, it works more generally for an orbifold which locally is a
quotient of a $\integers_2$ action.) The restriction of the
invariant star-product on $C^\infty(M)^{\integers_2}[[\hbar]]$
defines a deformation quantization of the orbifold
$X=M/\integers_2$. We use $A^{((\hbar))}_{M/\integers_2}$ to denote
the quantized algebra on $M/\integers_2$ with the characteristic
class equal to $\omega$ (with $A^{((\hbar))}_{M/\integers_2}$, we
refer to the algebra
$C^\infty(M)^{\integers_2}\otimes_{\complex}\complex((\hbar))$ with
the extended star-product $\star$).

According to \cite[Theorem 1.1]{D-E} and \cite[Theorem
VII]{PflPosTanTse}, the Hochschild cohomology of
$A^{((\hbar))}_{M/\integers_2}$ is equal to the cohomology of the
corresponding inertia orbifold with coefficient in
$\complex((\hbar))$. In the case of $M/\integers_2$, the
corresponding inertia orbifold is defined to be
$\tilde{X}:=M/\integers_2\sqcup M^\gamma/\integers_2$, where
$M^\gamma$ is the fixed submanifold of the group element $\gamma$ in
$\integers_2$. If $M^\gamma$ has several components maybe of
different dimensions, we will take the disjoint union of all
components. We use $\ell$ to denote the codimension of $M^\gamma$ in
$M$, and $\ell$ is a locally constant function on $X$. We point out
that $\integers_2$ acts on $M^\gamma$ trivially, but we will view
$M^\gamma$ as an orbifold with a global stabilizer group
$\integers_2$. We have
\begin{equation}\label{eq:hoch-coh}
H^\bullet(A^{((\hbar))}_{M/\integers_2},
A^{((\hbar))}_{M/\integers_2})=H^{\bullet-\ell}(\tilde{X},
\complex((\hbar))).
\end{equation}

Looking at Equation (\ref{eq:hoch-coh}), we conclude that the second
Hochschild cohomology of $A^{((\hbar))}_{M/\integers_2}$ is equal to
a direct sum of $H^2(M/\integers_2, \complex((\hbar)))$ and
$H^0(M^\gamma_2/\integers_2, \complex((\hbar)))$ for the components
$M^\gamma_2$ of $M^\gamma$ with codimension 2 (we have degree 0
cohomology on $M_2^\gamma$ because of the degree shifting in
Equation (\ref{eq:hoch-coh})). From the experience of deformation
quantization of a symplectic manifold, we know that the component
$H^2(M/\integers_2, \complex((\hbar)))$ of
$H^2(A^{((\hbar))}_{M/\integers_2}, A^{((\hbar))}_{M/\integers_2})$
corresponds to isomorphism classes of $\integers_2$ invariant
deformation quantizations on $M$. In the following of this section,
we construct deformations of $A^{((\hbar))}_{M/\integers_2}$
corresponding to $H^0(M^\gamma_2/\integers_2, \complex((\hbar)))$.
This gives a partial positive answer to \cite[Conjecture 1]{D-E} in
the case of $\integers_2$ orbifolds. We construct a deformation of
$A^{((\hbar))}_{M/\integers_2}$ in 3 steps,
\begin{enumerate}
\item Dunkl-Weyl algebra bundle,
\item Quantization of punctured disk bundle,
\item Global quantization.
\end{enumerate}

We briefly explain the strategy before we go into the details of the
construction.  In the first step, we will quantize the normal bundle
of the $\gamma$ fixed point submanifold with codimension 2.
Quantization of normal bundle of a fixed point submanifold has been
considered by Fedosov \cite{fe:g-index} and Kravchenko \cite{Kr}.
Here the new input is that along the fiber direction of the normal
bundle, we will use the Dunkl-Weyl algebra introduced at the end of
Section \ref{section:dunkl-weyl-algebra}.  The main result will be
that with the new algebra $\mathbb{D}_2((\hbar_1))((\hbar_2))$, the
construction of Fedosov \cite{fe:g-index} and Kravchenko \cite{Kr}
has a natural generalization and we obtain a flat connection on the
associated Dunkl-Weyl algebra bundle. This first step can be viewed
as a quantization of a tubular neighborhood of the $\gamma$ fixed
point submanifold with codimension 2. In order to extend this
quantization of a tubular neighborhood of the $\gamma$ fixed point
submanifolds, in Step 2, we restrict the quantization we obtained in
Step 1 to a punctured tubular neighborhood of the $\gamma$ fixed
point submanifold with the zero section removed. We are allowed to
restrict this quantization because of the locality of the product
$\star$ on $\mathbb{D}_2((\hbar_1))((\hbar_2))$ discussed in Section
\ref{section:dunkl-weyl-algebra}, Theorem \ref{thm:symbol-cal}. An
important property of the punctured tubular neighborhood is that the
$\integers_2$ action on it is free, and there is no fixed point.
Therefore, quantizations of such a punctured neighborhood can be
classified by Fedosov's theory without any extra contribution from
the fixed point submanifold. In Step 3, we will extend the
quantization obtained in Step 1 of the tubular neighborhood of the
$\gamma$ fixed point submanifold with codimension 2 to the whole
orbifold. Here the key is that with the study in Step 2, we can
regularize the quantization obtained in Step 1 on the punctured
tubular neighborhood. Namely, it is isomorphic to some standard
quantization of the punctured tubular neighborhood using Fedosov's
construction via the characteristic classes developed by Fedosov
\cite{fe:book} and Kravchenko \cite{Kr}. We point out the above
strategy is possible to be generalized by replacing the Dunkl-Weyl
algebra $\mathbb{D}_2((\hbar_1))((\hbar_2))$ by the spherical
subalgebra of other symplectic reflection algebras \cite{E-G} if we
know the product is ``local".

\subsection{Dunkl-Weyl algebra bundle}\label{sec:dunkl-weyl-alg}
We consider the collection of connected components of $M^\gamma$
which are of codimension 2, and we denote it by $M^\gamma_2$. The
symplectic orthogonal space of $TM^\gamma_2$ in $TM|_{M^\gamma_2}$
defines a normal bundle $N$ of $M^\gamma_2$ in $M$. $N$ inherits a
$\integers_2$ action from the $\integers_2$ action on $M$. The
restriction of the symplectic form $\omega$ to $N$ makes $N$ a
$\integers_2$ equivariant symplectic vector bundle with the
symplectic structure $\omega^N$. We will fix a global $\integers_2$
invariant compatible almost complex structure on $M$. (Such an
almost complex structure always exists.) An invariant almost complex
structure makes $N$ into a $\integers_2$ equivariant hermitian line
bundle. In particular, the corresponding principal bundle $P$
associated to $N$ is a principal $U(1)$ bundle. By Proposition
\ref{prop:assoc-poly}, $U(1)$ naturally acts on the Dunkl-Weyl
algebra $(\mathbb{D}_2((\hbar_1))((\hbar_2)),\star)$. Therefore, we
define the following Dunkl-Weyl algebra bundle over $M^\gamma_2$ by
\[
\calv:=P\times_{U(1)}\mathbb{D}_2((\hbar_1))((\hbar_2)).
\]

We have constructed a bundle $\calv$ of infinitely dimensional
algebras over a symplectic manifold $M^\gamma_2$. The hermitian
connection on the principal bundle $P$ induces a connection on the
Dunkl-Weyl algebra bundle. We exhibit this connection in local
coordinates. Let $x^\nu\ (\nu=1, \dots, 2n-2)$ be coordinates on
$M^\gamma_2$ and $z,\bar{z}$ be coordinates along the fiber
direction. The hermitian connection $\nabla^N$ on $N$ can be written
as
\[
\nabla^N_{\frac{\partial}{\partial
x^\nu}}\partial_z=i\Gamma_{\nu}(x)\partial_z,\qquad
\nabla^N_{\frac{\partial}{\partial
x^\nu}}\partial_{\bar{z}}=-i\Gamma_\nu(x) \partial_{\bar{z}},
\]
where $\Gamma_\nu$ is a real valued function on $M^\gamma_2$.

The induced connection $\partial^N$ on $\calv$ is defined by
\[
\partial^N \xi=dx^i \otimes \big(\frac{\partial \xi}{\partial
x^i}+\frac{i}{2\hbar_1}[\Gamma_i z\bar{z}, \xi ]_\star \big),\qquad
\xi \in \Gamma(\calv),
\]
where $[\ ,\ ]_\star$ is the star-commutator.

Let $R^N_{\nu_1\nu_2}$ be the curvature tensor associated to the
hermitian connection $\nabla$. Then one can quickly compute that
\[
\partial^N\circ \partial^N(\xi)=\frac{1}{2\hbar_1}[dx^{\nu_1}\wedge
dx^{\nu_2}R^N_{\nu_1\nu_2}z\bar{z}, \xi]_\star.
\]
We remark that because $N$ is a complex 1-dim vector bundle,
$\hbar_2$ does not appear in the above curvature expression although
it does show up in general in the star-product.

Let $\calw$ be the Weyl algebra (with coefficient in
$\complex((\hbar_1))$) bundle associated to the symplectic form
$\omega^0$ on $M^\gamma_2$. Following Fedosov's method
\cite{fe:book} and Kravchenko's modification \cite{Kr}, we construct
a flat connection $D$ on the associated bundle
\[
\wedge^\bullet T^*M^\gamma_2\otimes \calw\otimes \calv.
\]
We remark that $\calw$ is a bundle of algebras with respect to the
ring $\complex((\hbar_1))$, and $\calv$ is an algebra with respect
to the ring $\complex((\hbar_1))((\hbar_2))$. The tensor product
between $\calw$ and $\calv$ is taken over the ring
$C^\infty(M)((\hbar_1))$. Though our construction is essentially a
repetition of the ones in \cite{Kr}, since the Dunkl-Weyl algebra is
a new ingredient, we recall the construction of the flat connection
on $\wedge^\bullet T^*M^\gamma_2\otimes \calw\otimes \calv$ briefly.

Let $\nabla^T$ be a symplectic connection on $TM^\gamma_2$ with
respect to the symplectic form $\omega_0$. If $\Gamma_{ij}^k$ be the
Christoeffel symbol associated to the connection $\nabla^T$ on
$TM^\gamma_2$, then
\[
\partial^T \eta =d\eta+\frac{i}{2\hbar_1}[\omega^0_{il}\Gamma^l_{jk}y^iy^jdx^k,
\eta]_\ast,\ \eta\in \calw
\]
defines a connection on the Weyl algebra bundle $\calw$, where $y^i,
i=1,\dots 2n-2$ are coordinates along the fiber direction of
$TM^\gamma_2$ and $[\ ,\ ]_\ast$ is the commutator with respect to
the star-product $\ast$ on $\calw$. Accordingly,
$\partial:=\partial^T\otimes 1+1\otimes \partial^N$ defines a
connection on the bundle $\calw\otimes \calv$. It is a
straightforward computation to find
\[
\partial^2 a=\frac{i}{\hbar_1}[R^T\otimes 1+1\otimes R^N,a],\ a\in
\Gamma^\infty(\calw\otimes \calv),
\]
where $R^T$ is the curvature form of $\partial^T$.

Define $\delta:\wedge^\bullet T^*M^\gamma_2\otimes \calw\otimes
\calv\longrightarrow \wedge^\bullet T^*M^\gamma_2\otimes
\calw\otimes \calv$ by $\delta(a)=\sum_{i=1}^{2n-2}dx^i\partial
a/\partial y^i $. Then the same proof as \cite[Thm. 5.5]{Kr} proves
that there is a flat connection $D$ on $\wedge^\bullet
T^*M^\gamma_2\otimes \calw\otimes \calv$ of the form
\[
D=d+\frac{i}{\hbar_1}[\gamma,
\cdot]=\partial+\delta+\frac{i}{\hbar_1}[r,\cdot],
\]
where $r$ is an element in $T^*M^\gamma_2\otimes \calw\otimes
\calv$. The key point in the Kravchenko's proof of \cite[Theorem
5.5]{Kr} is that one has to modify the definition of the operator
$\delta$ to compensate the existence of the curvature form $R^N$ in
the expression of $\partial^2$ because $i/\hbar_1 R^N$ will
contribute an extra term of degree -1 in Fedosov's iteration
procedure of constructing a flat connection. This also applies to our construction with the following observation.

By the definition of the star-product on the Dunkl-Weyl algebra, one
can easily check that for $j\geq 2$, $C^0_{jl}$ and $C^1_{jl}$
vanishes on $z\bar{z}$. Therefore, we have  for an arbitrary $f\in
\mathbb{D}_2$, as $f$ is $\integers_2$ invariant,
{
\small
\[
\begin{split}
&[z\bar{z}, f]_\star\\
=&\hbar_1\Big( \{z\bar{z},f\}+\hbar_2\big(\frac{(z\bar{z}+z\bar{z})}{2\bar{z}}\frac{(f(z,-\bar{z})-f(-z,-\bar{z}))}{2z}-\frac{(f(z,\bar{z})-f(z,-\bar{z}))}{2\bar{z}}\frac{(-z\bar{z}-z\bar{z})}{2z}\big)\Big)\\
=&\hbar_1\{z\bar{z},f\},
\end{split}
\]
} where $\{\ .\ \}$ is the Poisson structure
$\{f,g\}=i(\partial_{\bar{z}}f\partial_zg-\partial_{\bar{z}}g\partial_zf)$.
This shows that on the bundle $\calv$ with the fiberwise algebra
isomorphic to $\mathbb{D}_2((\hbar_1))((\hbar_2))$, the curvature of
the connection $\partial^N$ which is equal to the star-commutator of
$R^N$ with respect to the product on $\mathbb{D}_2((\hbar_1,
\hbar_2))$ acts as same as the Poisson commutator associated to the
restriction of the symplectic form $\omega^N$ along the fiber
direction of $N$.  With this observation, we can repeat the exactly
same construction as those in Kravchenko's proof of \cite[Theorem
5.6]{Kr}. And we conclude that there is a flat connection $D$ on the
bundle $\wedge^\bullet T^*M^\gamma_2\otimes \calw\otimes \calv$
whose Weyl curvature is equal to
$\omega_c:=\widetilde{\omega}^0+\nu^2\big(\widetilde{\omega}^N+
R^Nz\bar{z} )\big)$, where $\widetilde{\omega}^0$ (and
$\widetilde{\omega}^N$) is the pullback of the symplectic form
$\omega^0$ (and $\omega^N$) onto $N$ (via the connection
$\nabla^N$), and $\nu$ is a sufficiently small real number. We have
chosen to use the subindex $c$ to stand for the name ``weak coupling
form" (\cite[Theorem 3.5]{Kr}). We point out that $\omega_c$ may not
be non-degenerate on the whole normal bundle $N$, but since
$M^\gamma$ is compact, $\omega_c$ does define a symplectic structure
on an $\epsilon$ neighborhood $N_\epsilon$ of the zero section in
$N$ with a sufficiently small $\epsilon$. To have the construction
in this section work, we need to restrict the algebra ${\mathbb
D}_2((\hbar_1))((\hbar_2))$ to an open ball with a sufficiently small
radius. Then all the above constructions in this subsection easily
generalize to define a quantization on $N_\epsilon$ as the product
on ${\mathbb D}_2((\hbar_1))((\hbar_2))$ is $\gamma$-local.

\subsection{Quantization of punctured disk bundle}
With the above flat connection $D$ on $\calw\otimes \calv$, we
consider flat sections with respect to the connection $D$. The space
$\cala_D$ of flat sections is isomorphic $C^\infty(M^\gamma_2,
\calv)$ as a vector space. Furthermore, including the isomorphism
between $\mathbb{D}_2((\hbar_1))((\hbar_2))$ with
$\operatorname{Poly}(\mathbb{R}^2)^{\integers_2}((\hbar_1))((\hbar_2))$ as vector
spaces, we conclude that $\cala_D$ is isomorphic to the space of
functions on $M^\gamma_2$ with value in the associated bundle
$P\otimes_{U(1)}\operatorname{Poly}(\mathbb{R}^2)^{\integers_2}((\hbar_1))((\hbar_2))$.
The later space can be viewed as the space
$C^\infty(N)^{\integers_2}((\hbar_1))((\hbar_2))$ of
$\integers_2$-invariant smooth functions on $N$.  The identification
between $C^\infty(N)^{\integers_2}((\hbar_1))((\hbar_2))$ and $\cala_D$
equips $C^\infty(N)^{\integers_2}((\hbar_1))((\hbar_2))$ with a new
associative product, which is a deformation of the standard
commutative product on
$C^\infty(N)^{\integers_2}((\hbar_1))((\hbar_2))$.

Via the exponential map with respect to some $\integers_2$ invariant
metric on $N$, we can identify a tubular neighborhood $B_\epsilon$
of $M^\gamma_2$ in $M$ with an $\epsilon$ neighborhood $N_\epsilon$
of the zero section in $N$ for some $\epsilon>0$. Furthermore, we
observe that the pullback of the symplectic form $\omega$ on $M$
defines a symplectic form $\omega$ on $N$. Since the restrictions of
$\omega$ and $\omega_c$ on $M^\gamma$ both are $\omega^0$, by
Moser's theorem (\cite[Theorem 7.4]{cannas}), there are
$\integers_2$ invariant neighborhoods $U_1$ and $U_2$ of $M^\gamma$
in $N$, and a $\integers_2$ equivariant diffeomorphism $\phi:U_1\to
U_2$ such that $\phi|_{M^\gamma}=id$, and $\phi^*\omega=\omega_c$.
Since $M$ is compact, $M^\gamma$ is also compact. So we can even
shrink $U_1$ and $U_2$ properly to make $U_1$ an $\epsilon$
neighborhood $N_\epsilon$ of the zero section in $N$. In the
following constructions, we always assume that we have used this
$\phi$ to identify the symplectic forms $\omega_c$ and $\omega$.

As was discussed in Theorem \ref{thm:symbol-cal}, the star-product
on $\mathbb{D}_2((\hbar_1))((\hbar_2))$ is $\gamma$-local. Furthermore,
the product on the standard Weyl algebra is also local. These
locality results imply that the deformed product on
$C^\infty(N)^{\integers_2}((\hbar_1))((\hbar_2))$ is $\gamma$-local,
and therefore, we are allowed to restrict
$(C^\infty(N)^{\integers_2}((\hbar_1))((\hbar_2)),\star)$ to the
$\epsilon$ neighborhood $N_\epsilon$ of the zero section in $N$,
which defines an associative deformation of $\integers_2$-invariant
functions on $N_\epsilon$. Finally, pushing forward along the
exponential map, we obtain a deformation of $\integers_2$-invariant
smooth functions on $B_\epsilon$, namely
$(C^\infty(B_\epsilon)^{\integers_2}((\hbar_1))((\hbar_2)), \star)$.

We next look at the space $B^*_\epsilon:=B_{\epsilon}-M^\gamma_2$ of
punctured neighborhood, which is diffeomorphic to
$N_\epsilon-M^\gamma_2$, the punctured disk bundle via the
exponential map. As the star product on the Dunkl-Weyl algebra
$\mathbb{D}_2$ is $\gamma$-local, we can restrict the algebra
$(C^\infty(B_\epsilon)^{\integers_2}((\hbar_1))((\hbar_2)), \star)$  to
the punctured neighborhood, which is denoted by
\[
(C^\infty(B^*_\epsilon)^{\integers_2}((\hbar_1))((\hbar_2)), \star).
\]
We observe that as the $\integers_2$ action on $B^*_\epsilon$ is
free, the space of $\integers_2$-invariant functions on
$B^*_\epsilon$ can be identified with the space of functions on the
quotient $B^*_\epsilon/\integers_2$, which is a smooth manifold.
Hence the algebra $(C^\infty(B^*_\epsilon)^{\integers_2}((\hbar_1,
\hbar_2)), \star)$ can be viewed as a deformation quantization of
the quotient $B^*_\epsilon/\integers_2$. On the other hand, the
$\integers_2$-invariant symplectic form $\omega$ on $M$ restricts to
define a symplectic form on the quotient $B^*_\epsilon/\integers_2$.
As $B^*_\epsilon/\integers_2$ is a smooth symplectic manifold, one
can apply the standard Fedosov construction of a deformation
quantization on $B^*_\epsilon/\integers_2$, and therefore obtain an
associative algebra $(C^\infty(B^*_\epsilon)^{\integers_2}((\hbar_1))((
\hbar_2)), \star_{\text{F}})$ with the characteristic class equal to
$\omega$.

\begin{proposition}\label{prop:quant-punctured-disk}
The algebras $(C^\infty(B^*_\epsilon)^{\integers_2}((\hbar_1))((
\hbar_2)), \star)$ and
$(C^\infty(B^*_\epsilon)^{\integers_2}((\hbar_1))((\hbar_2)),
\star_{\text{F}})$ are isomorphic.
\end{proposition}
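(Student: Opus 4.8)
The plan is to show that the two deformations are classified by the same characteristic class in $H^2(B^*_\epsilon/\integers_2)((\hbar_1))((\hbar_2))$ and then invoke the Fedosov classification theorem. First I would recall that on the punctured neighborhood $B^*_\epsilon$ the $\integers_2$-action is free, so $B^*_\epsilon/\integers_2$ is a genuine smooth symplectic manifold with symplectic form induced by $\omega$, and both algebras in the statement are honest deformation quantizations of $C^\infty(B^*_\epsilon/\integers_2)$ over the ground ring $\complex((\hbar_1))((\hbar_2))$. Indeed, the $\gamma$-locality established in Theorem \ref{thm:symbol-cal} is exactly what licenses restricting the globally constructed product $\star$ from $N$ to the punctured tubular neighborhood, and on the quotient the Dunkl-type corrections $C^1_{j,l}(\cdot,\cdot)\gamma$ simply act as a differential operator (since on $\integers_2$-invariant functions $\hat\gamma$ acts trivially after passing to the quotient), so $\star$ descends to a bidifferential star-product on $B^*_\epsilon/\integers_2$. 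Hence both $\star$ and $\star_{\mathrm F}$ fall within the scope of Fedosov's theorem, which asserts that a deformation quantization of a symplectic manifold is determined up to equivalence by its characteristic class in $\frac{1}{\nu}[\omega]+\nu\cdot H^2(\cdot)[[\nu]]$.

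The next step is to compute the characteristic class of $\star$ on $B^*_\epsilon/\integers_2$. By construction in Section \ref{sec:dunkl-weyl-alg}, the flat connection $D$ on $\wedge^\bullet T^*M^\gamma_2\otimes\calw\otimes\calv$ was built with Weyl curvature equal to the weak coupling form $\omega_c=\widetilde\omega^0+\nu^2(\widetilde\omega^N+R^N z\bar z)$, and via the Moser diffeomorphism $\phi$ we identified $\omega_c$ with the pullback of $\omega$. It therefore follows from the Fedosov–Kravchenko construction that the characteristic class of $(C^\infty(N_\epsilon)^{\integers_2}((\hbar_1))((\hbar_2)),\star)$ is the class of $\omega$ (to leading order in $\nu$, with higher corrections determined by $R^N$, but these are exactly the ones absorbed into the weak coupling form). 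Restricting to the punctured disk bundle and passing to the free quotient $B^*_\epsilon/\integers_2$, the characteristic class of the restricted $\star$ equals the image of $[\omega]$ under pullback along $N_\epsilon-M^\gamma_2\to B^*_\epsilon/\integers_2$. On the other hand, $\star_{\mathrm F}$ was defined, by fiat, to be a Fedosov quantization with characteristic class equal to $\omega$. So the two characteristic classes agree.

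Finally, having matched characteristic classes, Fedosov's classification theorem gives an equivalence $(C^\infty(B^*_\epsilon)^{\integers_2}((\hbar_1))((\hbar_2)),\star)\cong(C^\infty(B^*_\epsilon)^{\integers_2}((\hbar_1))((\hbar_2)),\star_{\mathrm F})$, which is the assertion. I expect the main obstacle to be the second step — making precise the claim that the globalized Dunkl–Weyl product, once restricted to the punctured bundle and descended to the quotient, has characteristic class exactly $[\omega]$ and no exotic $\hbar_2$-dependent contribution. The delicate point is that $\mathbb{D}_2((\hbar_1))((\hbar_2))$ is a two-parameter object and the $\hbar_2$-direction is precisely the ``exotic'' deformation we are trying to detect elsewhere in the paper; here one must verify that over the \emph{free} locus $B^*_\epsilon$ this $\hbar_2$-dependence is a coboundary (equivalently, that the reflection $\gamma$ contributes nothing to the characteristic class once the fixed locus is removed), so that $\star$ and $\star_{\mathrm F}$ really do land in the same equivalence class. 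The computation at the end of Section \ref{sec:dunkl-weyl-alg} showing $[z\bar z,f]_\star=\hbar_1\{z\bar z,f\}$ is the local manifestation of this fact, and the argument should be that, fiberwise away from the origin, the Dunkl operator $T_k$ and the ordinary derivative generate the same algebra of operators, so the two products are intertwined by an $\hbar_2$-linear formal automorphism.
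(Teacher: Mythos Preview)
Your strategy has a genuine gap at the step where you assert that $\star$ descends to a bidifferential star product on the smooth quotient $B^*_\epsilon/\integers_2$. This is not true, and without it Fedosov's classification theorem does not apply directly. The bilinear operators $C^i_{j,l}$ are built from the divided-difference operators $\tilde\partial_x f=(f(x,p)-f(-x,p))/x$ and $\tilde\partial_p f=(f(x,p)-f(x,-p))/p$ (through $A_\nu$ and $B_\nu$), not only from the group element $\gamma$. The $\integers_2$ action on $\reals^2$ is $(x,p)\mapsto(-x,-p)$, so the orbit of $(x,p)$ is $\{(x,p),(-x,-p)\}$ while $(-x,p)$ and $(x,-p)$ lie on a \emph{different} orbit. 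Thus a $\gamma$-local operator, which uses jets at all four points $(\pm x,\pm p)$, becomes on the quotient an operator depending on jets at \emph{two} distinct points of $B^*_\epsilon/\integers_2$. Even after $\hat\gamma$ acts trivially on invariants, the product $\star$ on the quotient is genuinely bi-local, not bidifferential, and the characteristic-class formalism you invoke is set up only for local star products. You correctly flag this as the ``delicate point'' in your last paragraph, but the sketch you give there (that away from the origin $T_k$ and $\partial$ generate the same algebra) does not by itself produce an intertwining automorphism compatible with the global bundle structure.

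The paper's proof handles this by inserting an intermediate algebra and moving the comparison to the fibers. It builds a third quantization $\star_{\mathrm W}$ using the ordinary Weyl bundle $\calv_{\mathrm W}=P\times_{U(1)}\mathbb{W}_2^{\integers_2}$ in place of the Dunkl--Weyl bundle; the comparison $\star_{\mathrm W}\cong\star_{\mathrm F}$ is exactly your characteristic-class argument and is unproblematic because $\star_{\mathrm W}$ \emph{is} bidifferential. The real work is then $\star\cong\star_{\mathrm W}$, and this is done fiberwise: one shows that $\mathbb{D}_2((\hbar_1))((\hbar_2))|_{D^*_\epsilon}$, viewed as a formal $\hbar_2$-deformation of $W_2^{\integers_2}|_{D^*_\epsilon}$, is trivial because $HH^2(W_2^{\integers_2}|_{D^*_\epsilon})\cong H^2_{\mathrm{dR}}(D^*_\epsilon/\integers_2)=0$ (the punctured-disk quotient has the homotopy type of a circle). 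A spectral-sequence refinement with the $\hbar_1$-filtration is needed to keep the trivialization inside $\complex[[\hbar_1,\hbar_2]]$, and an averaging argument upgrades it to a $U(1)$-equivariant isomorphism, which then globalizes over $M^\gamma_2$ via the associated-bundle construction and matches the connections $\partial^N$. Your instinct that the $\hbar_2$-dependence should be a coboundary on the free locus is right, but the mechanism that makes it one is the vanishing of $H^2$ of the \emph{fiber} $D^*_\epsilon/\integers_2$, together with $U(1)$-equivariance, rather than a direct application of Fedosov classification to the nonlocal product on the total space.
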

\begin{proof}
We consider an intermediate algebra to relate the above two
algebras. We look at the normal bundle $N$ over $M^\gamma_2$. The
restriction of the symplectic form $\omega$ to each fiber makes $N$
into a symplectic vector bundle. We consider the associated Weyl
algebra bundle to $\calv_{\text{W}}$ by
$\calv_{\text{W}}:=P\times_{U(1)}\mathbb{W}_2^{\integers_2}$, where
$\mathbb{W}_2$ is the tensor of the Weyl algebra $W_2$ (with coefficient
in $\complex((\hbar_1))$) on $\mathbb{R}^2$ with
$\complex((\hbar_2))$ and $\mathbb{W}_2^{\integers_2}$ is the
$\integers_2$ invariant subalgebra of $\mathbb{W}_2$. Similar to
what we have done in Section \ref{sec:dunkl-weyl-alg}, we can
construct a flat connection $D_{\text{W}}$ on the bundle
$\wedge^\bullet T^*M^\gamma_2\otimes \calw\otimes \calv_{\text{W}}$.
The space of flat sections with respect to the flat connection
$D_{\text{W}}$ is isomorphic to $C^\infty(N)^{\integers_2}((\hbar_1,
\hbar_2))$. Therefore we obtain an associative algebra
$(C^\infty(N)^{\integers_2}((\hbar_1))((\hbar_2)), \star_{\text{W}})$
as a deformation quantization of
$C^\infty(N)^{\integers_2}((\hbar_1))((\hbar_2))$. As the Weyl algebra
$\mathbb{W}_2$ has a local product, the algebra
$(C^\infty(N)^{\integers_2}((\hbar_1))((\hbar_2)), \star_{\text{W}})$
restricts to the punctured disk bundle $N_\epsilon-M^\gamma_2$. And
via the exponential map with respect to the $\integers_2$ invariant
riemannian metric, $(C^\infty(N)^{\integers_2}((\hbar_1))((\hbar_2)),
\star_{\text{W}})$ restricts to define a deformation quantization of
the punctured tubular neighborhood,
$(C^\infty(B^*_\epsilon)^{\integers_2}((\hbar_1))((\hbar_2)),
\star_{\text{W}})$.

Now we compare the two products $\star_{\text{W}}$ and $\star_{\text{F}}$ on 
$C^\infty(B^*_\epsilon)^{\integers_2}((\hbar_1))((\hbar_2))$. According to \cite[Sec. 5]{fe:g-index} and
\cite[Thm 5.6]{Kr}, these two algebras are isomorphic as they have
the same characteristic class $\omega$. To compare the algebra
$(C^\infty(B^*_\epsilon)^{\integers_2}((\hbar_1))((\hbar_2)), \star)$
with $(C^\infty(B^*_\epsilon)^{\integers_2}((\hbar_1))((\hbar_2)),
\star_{\text{W}})$, we see that the procedure to obtain these two
algebras are different only at one step, where $\calv$ is $P\times
_{U(1)}\mathbb{D}_2((\hbar_1))((\hbar_2))$ and $\calv_{\text{W}}$ is
$P\times_{U(1)}\mathbb{W}_2^{\integers_2}$. We have pointed out that
the product on $\mathbb{D}_2((\hbar_1))((\hbar_2))$ is $\gamma$-local
and the product on $\mathbb{W}^{\integers_2}_2$ is local. Therefore,
the restrictions of $(C^\infty(N)^{\integers_2}((\hbar_1))((\hbar_2)),
\star)$ and $(C^\infty(N)^{\integers_2}((\hbar_1))((\hbar_2)),
\star_{\text{W}})$ to $N_\epsilon-M^\gamma_2$ can be constructed via
the flat connections $D$ and $D_{\text{W}}$ on the bundle
$\wedge^\bullet T^*M^\gamma_2\otimes \calw\otimes
(P\times_{U(1)}\mathbb{D}_2((\hbar_1))((\hbar_2)) |_{D^*_\epsilon})$
and $\wedge^\bullet T^*M^\gamma_2\otimes \calw\otimes
(P\times_{U(1)}\mathbb{W}_2^{\integers_2} |_{D^*_\epsilon})$, where
$D^*_\epsilon$ is the puncture disk of radius $\epsilon$ in
$\mathbb{R}^2$.

The algebras  $\mathbb{D}_2((\hbar_1))((\hbar_2)) |_{D^*_\epsilon}$ and
$\mathbb{W}_2^{\integers_2} |_{D^*_\epsilon}$ are both deformation
quantization of the punctured disk $D^*_\epsilon/\integers_2$. In
fact, if we look at the algebra $\mathbb{D}_2((\hbar_1))((\hbar_2))
|_{D^*_\epsilon}$ more carefully, we notice that the product of this
algebra is an expression of $f,g\in
C^\infty(D^*_\epsilon)^{\integers_2}$ of power series $\hbar_1$ and
$\hbar_2$. In particular, if we look at $f\star g$ as a formal power
series of $\hbar_2$, the 0-th power term is exactly the product on
$\mathbb{W}_2^{\integers_2} |_{D^*_\epsilon}$. Therefore, we can
view $\mathbb{D}_2((\hbar_1))((\hbar_2)) |_{D^*_\epsilon}$ as a formal
deformation quantization of the algebra $W_2^{\integers_2}
|_{D^*_\epsilon}$, where $W_2^{\integers_2}$ is the subspace of
$\integers_2$ invariant elements in the Weyl algebra $W_2$ (with
coefficient in $\complex ((\hbar_1))$). As we can identify
$W_2^{\integers_2} |_{D^*_\epsilon}$ as a deformation quantization
of the quotient $D^*_\epsilon/\integers_2$, its Hochschild
cohomology of $W_2^{\integers_2} |_{D^*_\epsilon}$ can be computed
using the result of \cite{PflPosTanTse}. In particular, the second
Hochschild cohomology of $W_2^{\integers_2} |_{D^*_\epsilon}$ is
equal to the degree 2 de Rham cohomology of
$D^*_\epsilon/\integers_2$ with coefficient in
$\complex((\hbar_1))$. As $D^*_\epsilon/\integers_2$ is homotopic to
a circle, its degree 2 de Rham cohomology is zero. This implies that
$\mathbb{D}_2((\hbar_1))((\hbar_2)) |_{D^*_\epsilon}$ must be a trivial
deformation of $W_2^{\integers_2} |_{D^*_\epsilon}$. Furthermore, as
$U(1)$ is compact, by the standard averaging trick, we can obtain a
$U(1)$ equivariant isomorphism from $\mathbb{D}_2((\hbar_1))((\hbar_2))
|_{D^*_\epsilon}$ to $\mathbb{W}_2^{\integers_2}
|_{D^*_\epsilon}=W_2^{\integers_2}[\hbar_2^{-1},
\hbar_2]]_{D^*_\epsilon}$.

We look at the above construction of the isomorphism between
$\mathbb{D}_2((\hbar_1))((\hbar_2)) |_{D^*_\epsilon}$ and
$\mathbb{W}_2^{\integers_2}
|_{D^*_\epsilon}=W_2^{\integers_2}[\hbar_2^{-1},
\hbar_2]]|_{D^*_\epsilon}$ more carefully. If we denote
$\widetilde{W}_2$ to be the Weyl algebra on $\reals^2$ with
coefficient in $\complex[[\hbar_1]]$, the algebra
$\mathbb{D}_2[[\hbar_1,\hbar_2]] |_{D^*_\epsilon}$ can also be
viewed as a deformation of the algebra
$\widetilde{W}_2^{\integers_2}[[\hbar_2]] |_{D^*_\epsilon}$. By
Theorem \ref{thm:deformation-main}, (2) and its explanation in
Section \ref{subsec:bilinear-operators}, we have the property that
for every $i\geq 1$, the $\hbar_2^i$ term in the deformation
$\mathbb{D}_2[[\hbar_1,\hbar_2]] |_{D^*_\epsilon}$ takes value in
$\hbar_1\widetilde{W}_2^{\integers_2}[[\hbar_2]] |_{D^*_\epsilon}$.
We can use spectral sequence associated the $\hbar_1$-filtration to
compute the Hochschild cohomology of the algebra
$\widetilde{W}_2^{\integers_2}|_{D^*_\epsilon}$. The spectral
sequence degenerates at $E_2$ with
$E_2^{0,2}=\Gamma\Big(\wedge^2T\big(D^*_{\epsilon}/\integers_2\big)\Big)$
and $E_2^{p,q}=\{0\}$, for $p+q=2, p\geq 1$.
$HH^2(\widetilde{W}_2^{\integers_2}|_{D^*_\epsilon},
\widetilde{W}_2^{\integers_2}|_{D^*_\epsilon})$ naturally projects
onto $E_\infty^{0,2}\cong E^{0,2}_2$, which in our case is an
isomorphism. From this computation, we conclude that as the cocycles
on $\widetilde{W}_2^{\integers_2}|_{D^*_\epsilon}$ in the
deformation $\mathbb{D}_2[[\hbar_1,\hbar_2]] |_{D^*_\epsilon}$ take
value in $\hbar_1\widetilde{W}_2^{\integers_2}[[\hbar_2]]
|_{D^*_\epsilon}$, they must vanish in the cohomology along the
projection from $HH^2(\widetilde{W}_2^{\integers_2}|_{D^*_\epsilon},
\widetilde{W}_2^{\integers_2}|_{D^*_\epsilon})$ to $E^{0,2}_2$.
Therefore all the above cocycles must be coboundaries. This
observation allows us to choose a $U(1)$-equivariant isomorphism
between $\mathbb{D}_2[[\hbar_1,\hbar_2]] |_{D^*_\epsilon}$ and
$\widetilde{W}_2^{\integers_2}[[\hbar_2]] |_{D^*_\epsilon}$, which
naturally extends to define a $U(1)$ equivariant isomorphism from
$\mathbb{D}_2((\hbar_1))((\hbar_2)) |_{D^*_\epsilon}$ to
$\mathbb{W}_2^{\integers_2}
|_{D^*_\epsilon}=W_2^{\integers_2}[\hbar_2^{-1},
\hbar_2]]_{D^*_\epsilon}$.

We notice that the action of the Lie algebra of $U(1)$ on $\mathbb{D}_2((\hbar_1))((\hbar_2)) |_{D^*_\epsilon}$ (and
$\mathbb{W}_2^{\integers_2}
|_{D^*_\epsilon}=W_2^{\integers_2}[\hbar_2^{-1},
\hbar_2]]_{D^*_\epsilon}$) can be expressed as the commutator operator with respect to the function $z\bar{z}$ in  $\mathbb{D}_2((\hbar_1))((\hbar_2)) |_{D^*_\epsilon}$ (and
$\mathbb{W}_2^{\integers_2}
|_{D^*_\epsilon}=W_2^{\integers_2}[\hbar_2^{-1},
\hbar_2]]_{D^*_\epsilon}$). The $U(1)$ equivariance property implies that the isomorphism between $\mathbb{D}_2((\hbar_1))((\hbar_2)) |_{D^*_\epsilon}$ and
$\mathbb{W}_2^{\integers_2}
|_{D^*_\epsilon}=W_2^{\integers_2}[\hbar_2^{-1},
\hbar_2]]_{D^*_\epsilon}$ identifies $z\bar{z}$ modulo center elements.

With the above $U(1)$ equivariant isomorphism between the algebras
on each fiber, we have an natural isomorphism of bundles $\calv$ and
$\calv_{\text{F}}$ which accordingly identifies the connections
$\partial_N$ on the corresponding bundles. Noticing that Fedosov's
construction of flat connection is canonical with respect to the
choice of a symplectic connection, we can easily check that the
construction of flat connections $D$ and $D_{\text{W}}$ are actually
compatible with respect to this isomorphism of bundles. (We point
out that we may have to adjust $z\bar{z}$ in the construction of the
connection $\partial^N$ by a center element, which is in
$\complex[[\hbar_1, \hbar_2]]$, due to the identification. But this
change does not affect the whole construction of the algebras.)
Hence, we can conclude that there is an isomorphism between
$(C^\infty(B^*_\epsilon)^{\integers_2}((\hbar_1))((\hbar_2)), \star)$
and $(C^\infty(B^*_\epsilon)^{\integers_2}((\hbar_1))((\hbar_2)),
\star_{\text{W}})$ as flat sections of $D$ and $D_{\text{W}}$. We
remark that due the fact that isomorphism

We conclude that $(C^\infty(B^*_\epsilon)^{\integers_2}((\hbar_1,
\hbar_2)), \star)$ is isomorphic to
$(C^\infty(B^*_\epsilon)^{\integers_2}((\hbar_1))((\hbar_2)),
\star_{\text{W}})$ and therefore is isomorphic to
$(C^\infty(B^*_\epsilon)^{\integers_2}((\hbar_1))((\hbar_2)),
\star_{\text{F}})$.
\end{proof}

\subsection{Global algebra}

In this subsection, we construct the algebra promised at the
beginning of this section, which is a deformation of
$A^{((\hbar_1))}_{M/\integers_2}$.

Recall that $M^\gamma_2$ is a disjoint union of fixed point
submanifolds of $M$ which are of codimension 2. Fix a
$\integers_2$-invariant almost complex structure on $M$, which also
defines a $\integers_2$ invariant metric on $M$. We choose a
sufficiently small $\epsilon$ such that the $\epsilon$ tubular
neighborhood of each component of $M^\gamma_2$ in $M$ does not
intersect with each other. We use $B_\epsilon$ to denote the
disjoint union of the $\epsilon$ tubular neighborhood of each
component of $M^\gamma_2$. Furthermore, we denote $M^-$ to be the
open complement $M-M^\gamma_2$ to the closed subset $M^\gamma_2$. In
this way, we have the orbifold as a union of two open subsets
$B_\epsilon/\integers_2$ and $M^-/\integers_2$, and the intersection
of these two open sets is $B^*_\epsilon/\integers_2$, the $\epsilon$
punctured neighborhood of $M^\gamma$ in $M/\integers_2$.

We construct an algebra $\mathfrak
{A}^{((\hbar_1))((\hbar_2))}_{M/\integers_2}$ as follows. On
$B_\epsilon/\integers_2$, this algebra is isomorphic to
$(C^\infty(B_\epsilon)^{\integers_2}((\hbar_1))((\hbar_2)), \star)$
which, via the exponential map, can be identified as the restriction
to the bundle $N_\epsilon$ of the space of flat sections of the
Dunkl-Weyl algebra introduced in Section \ref{sec:dunkl-weyl-alg}.
On $M^-/\integers_2$, the algebra $\mathfrak
{A}^{((\hbar_1))((\hbar_2))}_{M/\integers_2}$ is isomorphic to the
Fedosov quantization\footnote{The algebra
$\cala^{((\hbar_1))((\hbar_2))}_{M^-}$ is defined to
$\cala^{((\hbar_1))}_{M^-}\otimes \complex((\hbar_2))$.}
$(\cala^{((\hbar_1))((\hbar_2))}_{M^-})^{\integers_2}$ of
$M^-/\integers_2$ with the Weyl curvature being $\omega$. By
Proposition \ref{prop:quant-punctured-disk}, the restriction of
$(C^\infty(B_\epsilon)^{\integers_2}((\hbar_1))((\hbar_2)), \star)$ to
$B^*_{\epsilon}/\integers=M^-/\integers_2\cap
B_\epsilon/\integers_2$ is isomorphic to
$(C^\infty(B^*_\epsilon)^{\integers_2}((\hbar_1))((\hbar_2)),
\star_{\text{F}})$, which is the restriction of
$(\cala^{((\hbar_1))((\hbar_2))}_{M^-})^{\integers_2}$ to
$B^*_\epsilon/\integers_2$. We define
$\mathfrak{A}^{((\hbar_1))((\hbar_2))}_{M/\integers_2}$ to be the
algebra defined by gluing
$(C^\infty(B_\epsilon)^{\integers_2}((\hbar_1))((\hbar_2)), \star)$
and $(\cala^{((\hbar_1))((\hbar_2))}_{M^-})^{\integers_2}$ via the
isomorphism on $B^*_\epsilon/\integers_2$.

We summarize the above construction into the following theorem.
\begin{theorem}\label{thm:deformation-main}The algebra
$\mathfrak{A}^{((\hbar_1))((\hbar_2))}_{M/\integers_2}$ is a nontrivial
deformation of the algebra $\cala^{((\hbar_1))}_{M/\integers_2}$.
\end{theorem}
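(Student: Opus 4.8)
The plan is to check three things in turn: that the object $\mathfrak{A}^{((\hbar_1))((\hbar_2))}_{M/\integers_2}$ glued together above is a well-defined sheaf of associative algebras, that it is flat over $\complex((\hbar_1))[[\hbar_2]]$ with reduction at $\hbar_2=0$ equal to $\cala^{((\hbar_1))}_{M/\integers_2}$, and --- the real content --- that this deformation in the direction of $\hbar_2$ is not trivial. For the first point everything is already in hand: we have $M/\integers_2=(B_\epsilon/\integers_2)\cup(M^-/\integers_2)$ with overlap $B^*_\epsilon/\integers_2$, a sheaf of algebras on each of the two opens, and by Proposition \ref{prop:quant-punctured-disk} an isomorphism of their restrictions to the overlap, so sheaf gluing over this two-element cover produces a sheaf of $\complex((\hbar_1))((\hbar_2))$-algebras whose module of global sections is $\mathfrak{A}^{((\hbar_1))((\hbar_2))}_{M/\integers_2}$; softness of these $C^\infty$-module sheaves makes the associated Mayer--Vietoris sequence exact, so I can compute $\mathfrak{A}^{((\hbar_1))((\hbar_2))}_{M/\integers_2}$ and its $\hbar_2$-reductions fibrewise from the two pieces and their overlap. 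Since both local star-products involve only nonnegative powers of $\hbar_2$ --- indeed, by part (2) of Theorem \ref{thm:symbol-cal}, only finitely many in each $\hbar_1$-degree --- reduction modulo $\hbar_2$ is legitimate and commutes with the gluing.

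Next I would identify the $\hbar_2=0$ reduction. On $M^-/\integers_2$ it is, by definition, the Fedosov quantization $(\cala^{((\hbar_1))}_{M^-})^{\integers_2}$ with characteristic class $\omega$. On $B_\epsilon/\integers_2$ the Dunkl star-product $\star$ specialises at $\hbar_2=0$ to the Moyal product (end of Section \ref{section:dunkl-weyl-algebra}), hence the Dunkl--Weyl bundle $\calv$ degenerates to the Weyl-algebra bundle $\calv_{\text{W}}$ and the flat connection $D$ to the ordinary Fedosov connection, whose Weyl curvature is $\omega_c|_{\hbar_2=0}=\omega|_{B_\epsilon}$ once the Moser diffeomorphism $\phi$ built into the construction is taken into account; so the $\hbar_2=0$ reduction on $B_\epsilon/\integers_2$ is again a Fedosov quantization with class $\omega$. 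On the overlap these two Fedosov quantizations carry the same characteristic class and hence are isomorphic compatibly with the gluing (this is Proposition \ref{prop:quant-punctured-disk} read modulo $\hbar_2$, but one could also invoke directly the uniqueness of symplectic deformation quantizations with a prescribed characteristic class, \cite{fe:book}). Therefore the $\hbar_2=0$ reduction of $\mathfrak{A}^{((\hbar_1))((\hbar_2))}_{M/\integers_2}$ is a Fedosov deformation quantization of $M/\integers_2$ with characteristic class $\omega$, i.e. it is $\cala^{((\hbar_1))}_{M/\integers_2}$; flatness is inherited from the two flat local pieces. This proves that $\mathfrak{A}^{((\hbar_1))((\hbar_2))}_{M/\integers_2}$ is a deformation of $\cala^{((\hbar_1))}_{M/\integers_2}$.

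The main obstacle is nontriviality, and here the strategy is to localise. If $\mathfrak{A}^{((\hbar_1))((\hbar_2))}_{M/\integers_2}$ were isomorphic over $\complex((\hbar_1))[[\hbar_2]]$ to $\cala^{((\hbar_1))}_{M/\integers_2}$ with product constant in $\hbar_2$, that isomorphism would restrict over $B_\epsilon/\integers_2$ and, after evaluating flat sections along a normal fibre $D_\epsilon$ of a point of $M^\gamma_2$ (the $U(1)$-equivariance of Proposition \ref{prop:assoc-poly} is what allows this fibrewise passage), would exhibit $\mathbb{D}_2[[\hbar_1,\hbar_2]]|_{D_\epsilon}$ as a trivial $\hbar_2$-deformation of $W_2^{\integers_2}|_{D_\epsilon}$. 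But Theorem \ref{thm:symbol-cal} together with Proposition \ref{prop:assoc-poly} identifies $(\operatorname{Poly}(\reals^2)\rtimes\integers_2[[\hbar_1,\hbar_2]],\star)$, hence $\mathbb{D}_2$, with the spherical subalgebra of the symplectic reflection algebra for $\integers_2$ acting on $\reals^2$, and this is a nontrivial deformation in $\hbar_2$ by Etingof--Ginzburg \cite{E-G}. Concretely, its first-order term is the Hochschild $2$-cocycle $\mu_1(a_1,a_2)=\sum_j\hbar_1^jC^1_{j,1}(a_1,a_2)\gamma$ (the $C^0_{j,l}$ contribution vanishes since $l=1$ is odd), and the formula for $C^1_{1,1}$ shows that the $\hbar_1$-leading part of $\mu_1$ restricts along the normal directions of $M^\gamma_2$ to a nonzero constant multiple of the twisted bidifferential operator $\hbar_1(\partial_pa_1)(\partial_xa_2)\gamma$; under the identification of Hochschild cohomology with inertia-orbifold cohomology (\cite{D-E}, \cite{PflPosTanTse}) this represents a nonzero element of the twisted summand $H^0(M^\gamma_2/\integers_2)((\hbar_1))$ of $H^2(\cala^{((\hbar_1))}_{M/\integers_2},\cala^{((\hbar_1))}_{M/\integers_2})$. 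This contradiction finishes the proof. I expect the genuinely delicate point to be this last step --- passing from ``$\mu_1$ is a nonzero bilinear operator'' to ``$[\mu_1]\neq 0$ in the twisted summand of $H^2$'' --- and it is precisely here that the structural input from \cite{E-G} on the local model (equivalently, the explicit combinatorics of the $C^i_{j,l}$ together with the inertia-orbifold description of $H^2$) is indispensable.
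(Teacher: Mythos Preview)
Your proposal follows the same strategy as the paper's own proof, which is extremely brief: observe that the $\hbar_2^0$ component of the product on $\mathfrak{A}^{((\hbar_1))((\hbar_2))}_{M/\integers_2}$ is the product on $\cala^{((\hbar_1))}_{M/\integers_2}$, and then argue nontriviality by reducing to the local model and invoking the fact that $\mathbb{D}_2((\hbar_1))((\hbar_2))$ is a nontrivial deformation of $\mathbb{W}_2^{\integers_2}$. You supply considerably more detail than the paper --- the explicit sheaf-gluing, the careful identification of the $\hbar_2=0$ reduction on both open sets, and the Kodaira--Spencer class computation --- but the skeleton is the same.

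Two comments on your execution. First, the passage ``that isomorphism would restrict over $B_\epsilon/\integers_2$ and, after evaluating flat sections along a normal fibre $D_\epsilon$ \ldots'' is not fully justified as written: an arbitrary $\complex((\hbar_1))[[\hbar_2]]$-algebra isomorphism of the global algebras need not be $U(1)$-equivariant or fibre-preserving, so you cannot simply restrict it to a single normal fibre. The $U(1)$-equivariance of Proposition~\ref{prop:assoc-poly} is a property of the Dunkl product, not of a hypothetical trivialising isomorphism. Fortunately your second argument --- computing the first-order class $[\mu_1]\in H^2(\cala^{((\hbar_1))}_{M/\integers_2},\cala^{((\hbar_1))}_{M/\integers_2})$ and locating it in the twisted summand $H^0(M^\gamma_2/\integers_2)((\hbar_1))$ --- is independent of this restriction step and suffices on its own; you should lead with it rather than present it as a ``concrete'' afterthought. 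Second, a small inaccuracy: the leading bilinear operator $C^1_{1,1}$ is built from the difference operators $\tilde D$, not from $\partial_x,\partial_p$, so your displayed expression $\hbar_1(\partial_pa_1)(\partial_xa_2)\gamma$ is not literally correct. This does not affect the conclusion, since under the quasi-isomorphism to inertia-orbifold cohomology the class still lands on the nonzero constant function on $M^\gamma_2$, but the formula should be corrected.
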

\begin{proof}
We look at the product on the algebra
$\mathfrak{A}^{\hbar_1,\hbar_2}$. If $f$ and $g$ are two elements of
$C^\infty(M/\integers_2)$, $f\star g$ can be written as a formal
power series of $\hbar_2$. From the construction in Section
\ref{sec:dunkl-weyl-alg}, it is not difficult to see that the
$\hbar_2^0$ component is exactly the product of the algebra
$\cala^{((\hbar_1))}_{M/\integers_2}$. Furthermore, from the local
computation on $B_\epsilon$, we can see that as
$\mathbb{D}_2((\hbar_1))((\hbar_2))$ is a nontrivial deformation of the
invariant Weyl algebra $\mathbb{W}_2^{\integers_2}$, the algebra
$\mathfrak{A}^{((\hbar_1))((\hbar_2))}_{M/\integers_2}$ is a nontrivial
deformation.
\end{proof}
\begin{remark}
In the construction of the algebra
$\mathfrak{A}^{((\hbar_1))((\hbar_2))}$, we have chosen an $\epsilon$
neighborhood $B_\epsilon$ of $M^\gamma_2$. We point out that
different choices of $\epsilon$ give rise to isomorphic algebras
$\mathfrak{A}^{((\hbar_1))((\hbar_2))}$. It is not hard to check the
$U(1)$ equivariant isomorphism from $\mathbb{D}_2((\hbar_1))((\hbar_2))
|_{D^*_\epsilon}$ to $\mathbb{W}_2^{\integers_2} |_{D^*_\epsilon}$
constructed in Proposition \ref{prop:quant-punctured-disk} can be
made to be compatible with the restriction map to $B_{\epsilon'}$
with $\epsilon'<\epsilon$, as the operators appearing in the
isomorphism are all $\gamma$-local operators. This compatibility
with respect to the restriction map assures that the outcome algebra
$\mathfrak{A}^{((\hbar_1))((\hbar_2))}$ are all isomorphic for
different choices of $\epsilon$.

We have used an almost complex structure and therefore a compatible
riemannian metric to identify the $\epsilon$ neighborhood
$B_\epsilon$ with the $\epsilon$ neighborhood $N_\epsilon$ of the
zero section of $N$. Our algebra $\mathfrak{A}^{((\hbar_1,
\hbar_2))}$ does seem to depend on the choice of almost complex
structures since we are taking the normal ordering of the operator
symbol calculus in Definition \ref{dfn:symbol} and also in
Definition \ref{dfn:deformation} of the Dunkl-Weyl algebra
$\mathbb{D}_2((\hbar_1))((\hbar_2))$. The analogous well-known
phenomena is that wick and anti-wick deformation quantization of an
almost K\"ahler manifold depends on the choices of almost complex
structures. We plan to discuss this dependence of almost complex
structures in the future.
\end{remark}

\begin{remark}\label{rmk:conjecture}We explain how far we are away from a full proof of
the Dolgushev-Etingof conjecture in the case of a $\integers_2$
orbifold. In this section, we have constructed a deformation of the
algebra $A^{((\hbar_1))}_{M/\integers_2}$ along the direction of the
union $M^\gamma_2$ of all codimension 2 components in the inertia
orbifold $\widetilde{M/\integers_2}$. Furthermore, we observe that
our constructions are local with respect to every connected
component $M^\gamma_2$. Such an observation allows us to construct a
deformation of $A^{((\hbar_1))}_{M/\integers_2}$ for every connected
component of $M^\gamma_2$. If we associate a formal parameter $c_i$
for every component of $M^\gamma_2$, we actually have constructed a
universal deformation of $A^{((\hbar_1))}_{M/\integers_2}$
parametrized by codimension 2 components in $\widetilde{M/\integers_2}$. This is
the main part of the Dolgushev-Etingof conjecture \cite{D-E}.

We are not able to prove the full conjecture of Dolgushev-Etingof
for $\integers_2$ orbifolds because in our construction, in
particular the proof of Proposition \ref{prop:quant-punctured-disk},
we have used crucially two extra assumptions. One is that we have
assumed the characteristic class of
$A^{((\hbar_1))}_{M/\integers_2}$ is $\omega$, the other is that the
parameter $\hbar_2$ is formal. However, the Dolgushev-Etingof
conjecture \cite{D-E} does not require these two assumptions. The
first assumption allows us to compare the Fedosov quantizations of the
normal bundle $N$ of the fixed point submanifold $M^\gamma_2$ and a
tubular neighborhood $B_\epsilon$. If we change the characteristic
class $\omega$ by a class $t$ in $H^2(M/\integers_2)((\hbar_1))$, it
is hard to realize the information of $t$ on the normal bundle $N$,
which prevents us from comparing the corresponding Fedosov
quantizations. The second assumption that $\hbar_2$ is formal allows
us to apply homological algebra arguments to show that ${\mathbb
{D}}_2((\hbar_1))((\hbar_2))|_{D^*_\epsilon}$ is isomorphic to
$\mathbb{W}_2^{\integers_2}|_{D^*_\epsilon}$. If we are able to
prove that the constructed isomorphism in power series of $\hbar_2$
is convergent, then we can actually allow $\hbar_2$ to be a number
in $\complex$. We do not have solutions to avoid these two
assumptions now, and hope to address these problems in a future
publication.
\end{remark}

We finally remark that in this section, we have been working with the
global quotient orbifold, the quotient of a symplectic manifold $M$ by a
$\integers_2$ action. Our construction does generalize for general
orbifolds which is locally either diffeomorphic to $\reals^n$ or the
quotient of $\reals^n$ by the linear $\integers_2$ action.
\section{Proof Theorem \ref{thm:symbol-cal}}
\label{sec:proof-thm}

We will prove Theorem \ref{thm:symbol-cal} in 2 steps. In the first
step, we work with Dunkl pseudodifferential operators of the forms
introduced in Definition \ref{dfn:pseud-op} to derive an asymptotic
expansion of the symbol of the product of two operators. The
asymptotic expansion of the symbol we obtain in Step I may contain
a sum of infinitely many terms in a fixed symbol class. In the
second step, we will rewrite the asymptotic expansion of the symbol
obtained in Step I into the expressions introduced in Section
\ref{subsec:bilinear-operators}.
\subsection{Step I}
Let $a$, $b$ be two polynomials on $\mathbb{R}^2$. To compute the
asymptotic expansion of $Op_k(a)\circ Op_k(b)$ we need to study the
following integral
\[
\int_{\mathbb{R}}\int_{\mathbb{R}}\int_{\mathbb{R}}d\mu_k(p)d\mu_k(y)d\mu_k(p_1)E_{k}(x,ip)a(x,p)E_k(y,-ip)E_k(y,
ip_1)b(y,p_1)E_{k}(z,-ip_1).
\]
Since $a(x,p)$ is a polynomial, we take the Taylor expansion of
$a(x,p)$ with respect to $p$, i.e.
$a(x,p)=\sum_{\alpha}p^\alpha/\alpha!\partial_p^\alpha a(x,0)$.

We insert the Taylor expansion into the above equation of
$Op_k(a)\circ Op_k(b)$.
\[
\begin{split}
\int_{\mathbb{R}}\int_{\mathbb{R}}\int_{\mathbb{R}}&d\mu_k(p)d\mu_k(y)d\mu_k(p_1)\sum_\alpha
E_{k}(x,ip)\frac{p^\alpha}{\alpha^!}\partial_p^\alpha
a(x,0)E_k(y,-ip)\\
&\qquad\qquad E_k(y, ip_1)b(y,p_1)E_{k}(z,-ip_1).
\end{split}
\]
Recall that when applying the variable $y$, we have
$T_k(E_k(y,-ip))=(-ip)E_{k}(y,-ip)$. Hence we can replace
$pE_{k}(y,-ip)$ by $iT_k(E_k(y,-ip))$ in the above integral, and
obtain
\[
\begin{split}
\int_{\mathbb{R}}\int_{\mathbb{R}}\int_{\mathbb{R}}&d\mu_k(p)d\mu_k(y)d\mu_k(p_1)\sum_\alpha
E_{k}(x,ip)\frac{p^{\alpha-1}}{\alpha^!}\partial_p^\alpha
a(x,0)iT_k(E_k(y,-ip))\\
&\qquad\qquad E_k(y, ip_1)b(y,p_1)E_{k}(z,-ip_1).
\end{split}
\]
As $T_k$ is a skew adjoint operator on $L^2_k(\mathbb{R})$,  we can
rewrite the above equation as
\[
\begin{split}
\int_{\mathbb{R}}\int_{\mathbb{R}}\int_{\mathbb{R}}&d\mu_k(p)d\mu_k(y)d\mu_k(p_1)\sum_\alpha
E_{k}(x,ip)\frac{p^{\alpha-1}}{\alpha^!}\partial_p^\alpha
a(x,0)E_k(y,-ip)\\
&\qquad\qquad (-i)T_k\big(E_k(y, ip_1)b(y,p_1)\big)E_{k}(z,-ip_1).
\end{split}
\]
We apply
\[
\begin{split}
&T_k\big(E_k(y,
ip_1)b(y,p_1)\big)\\
=&ip_1E_k(y,ip_1)b(y,p_1)+E_k(y,ip_1)\partial_yb(y,p_1)+E_k(-y,ip_1)k\tilde{\partial}_yb(y,p_1)
\end{split}
\]
to the above equation, and obtain
\[
\begin{split}
&\int_{\mathbb{R}}\int_{\mathbb{R}}\int_{\mathbb{R}}d\mu_k(p)d\mu_k(y)d\mu_k(p_1)\sum_\alpha
E_{k}(x,ip)\frac{p^{\alpha-1}}{\alpha^!}\partial_p^\alpha
a(x,0)E_k(y,-ip)\\
&\qquad \big(p_1E_k(y,
ip_1)b(y,p_1)-iE_k(y,ip_1)\partial_yb(y,p_1)-E_k(-y,ip_1)k\tilde{\partial}_yb(y,p_1))\big)E_{k}(z,-ip_1).
\end{split}
\]
Substituting $p_1$ by $-p_1$, we obtain the following expression
\[
\begin{split}
\int_{\mathbb{R}}\int_{\mathbb{R}}\int_{\mathbb{R}}&d\mu_k(p)d\mu_k(y)d\mu_k(p_1)\sum_\alpha
E_{k}(x,ip)\frac{p^{\alpha-1}}{\alpha^!}\partial_p^\alpha
a(x,0)E_k(y,-ip)\\
&\qquad\qquad
E_k(y,ip_1)\big((p_1-i\partial_y)b(y,p_1)-ik\tilde{\partial}_yb(y,-p_1)\hat{\gamma})\big)E_{k}(z,-ip_1),
\end{split}
\]
where $\hat{\gamma}$ is an operator on variable  $z$ changing $z$
to $-z$. In summary, we have seen above that an extra variable $p$
on $a(x,p)$ in the integral of $Op_k(a)\circ Op_k(b)$ is equivalent
to apply $p_1-i\partial_y-ik\sigma_2\tilde{\partial}\hat{\gamma}$ on
$b$, where $\sigma_2$ is mapping $b(x,p)$ to $b(x,-p)$.

By induction with respect to the power $\alpha$, we have the following expression
\[
\begin{split}
&\int_{\mathbb{R}}\int_{\mathbb{R}}\int_{\mathbb{R}}d\mu_k(p)d\mu_k(y)d\mu_k(p_1)E_{k}(x,ip)a(x,p)E_k(y,-ip)E_k(y,
ip_1)\\
&\qquad\qquad b(y,p_1)E_{k}(z,-ip_1)\\
=&\int_{\mathbb{R}}\int_{\mathbb{R}}\int_{\mathbb{R}}d\mu_k(p)d\mu_k(y)d\mu_k(p_1)E_{k}(x,ip)E_k(y,-ip)E_k(y,
ip_1)\\
&\qquad\qquad \sum_\alpha
\frac{1}{\alpha!}\partial_p^{\alpha}a(x,0)[p_1-i\partial_y-ik\sigma_2\tilde{\partial}_y\hat{\gamma}]^\alpha b(y,p_1) E_{k}(z,-ip_1).
\end{split}
\]
Integrating over variable $p$, we have the integral on the right
hand side equal to
\[
\begin{split}
&\int_{\mathbb{R}}\int_{\mathbb{R}}d\mu_k(y)d\mu_k(p)E_k(x,
ip_1)\\
&\qquad\qquad \sum_\alpha
\frac{1}{\alpha!}\partial_p^{\alpha}a(x,0)[p_1-i\partial_y-ik\sigma_2\tilde{\partial}_y\hat{\gamma}]^\alpha b(y,p_1)|_{y=x} E_{k}(z,-ip_1).
\end{split}
\]
Therefore, we conclude that
\begin{equation}\label{eq:expansion}
Op_k(a)\circ Op_k(b)=Op_k( \sum_\alpha
\frac{1}{\alpha!}\partial_p^{\alpha}a(x,0)[p_1-i\partial_y-ik\sigma_2\tilde{\partial}_y\hat{\gamma}]^\alpha b(y,p_1)|_{y=x} ).
\end{equation}
We remark that the above sum is finite as $a$ is a polynomial.
\subsection{Step II}
In this step,  we aim to understand the expansion formula
\[
\sum_\alpha
\frac{1}{\alpha!}\partial_p^{\alpha}a(x,0)[p_1-i\partial_y-ik\sigma_2\tilde{\partial}_y\hat{\gamma}]^\alpha b(y,p_1)|_{y=x}
\]
obtained in the previous subsection.

We look at the power
$[p_1-i\partial_y-ik\sigma_2\tilde{\partial}_y\hat{\gamma}]^\alpha$.
Define $A=-i\partial_y$,  and
$B=-ik\sigma_2\tilde{\partial}_y\hat{\gamma}$. We observe the
following commuting relations,
\[
Ap_1=p_1A,\qquad p_1B=-Bp_1.
\]
With these relations, we write $[p_1-i\partial_y-ik\sigma_2\tilde{\partial}_y\hat{\gamma}]^\alpha$ as
\[
\sum_{\nu\in P_{m,n}} c_{\nu}(-i)^{m+n}k^n p_1^{\alpha-m-n}B_\nu \sigma_2^n\hat{\gamma}^n,
\]
where $P_{m,n}$ is the set of solutions to Eq.
(\ref{eq:linear-partition}) introduced in Section
\ref{subsec:bilinear-operators}, and $B_\nu$ is the operator
introduced in Section \ref{subsec:bilinear-operators} as
compositions of $\partial_y$ and $\tilde{\partial}_y$, and $c_{\nu}$
is number determined by $\nu\in P_{m,n}$.

We study the number $c_{\nu }$ more carefully. $c_{\nu}$ is the number of the term
\[
(-i)^{m+n}k^n p_1^{\alpha-m-n}B_\nu \sigma_2^n\hat{\gamma}^n
\]
appearing in the expansion
$[p_1-i\partial_y-ik\sigma_2\tilde{\partial}_y\hat{\gamma}]^\alpha$.
When we write out the expansion of
$[p_1-i\partial_y-ik\sigma_2\tilde{\partial}_y\hat{\gamma}]^\alpha$,
it is a sum of monomials of the form
$p_1^{x_0}\partial_y^{\nu_0}\tilde{\partial}_yp_1^{x_1}\partial_y^{\nu_1}\tilde{\partial}_y\cdots
\tilde{\partial}_y
p_1^{x_n}\partial_y^{\nu_n}\sigma_2^n\hat{\gamma}^n$, where
$\nu=(\nu_0,\dots, \nu_n)$ is a fixed element of $P_{m,n}$, and
$x_0, \dots, x_n$ are nonnegative integers with
$x_0+\cdots+x_n=\alpha-m-n$. We remark that as $p_1$ commutes with
$\partial_y$, we do not need to count the relative positions between
$\partial_y$ and $p_1$.  Therefore, totally there are
\[
\prod_{j=0}^{n}\left(\begin{array}{c}y_i+x_i\\ x_i \end{array}\right)
\]
number of the term $p_1^{x_0}\partial_y^{\nu_0}\tilde{\partial}_yp_1^{x_1}\partial_y^{\nu_1}\tilde{\partial}_y\cdots \tilde{\partial}_y p_1^{x_n}\partial_y^{\nu_n}\sigma_2^n\hat{\gamma}^n$ in the power $[p_1-i\partial_y-ik\sigma_2\tilde{\partial}_y\hat{\gamma}]^\alpha$. Furthermore, as $\sigma_2$ changes the sign of $p_1$, when we move $\sigma_2$ to the right end, we need to count the change of signs.  Therefore, in front of the term $p_1^{x_0}\partial_y^{\nu_0}\tilde{\partial}_yp_1^{x_1}\partial_y^{\nu_1}\tilde{\partial}_y\cdots \tilde{\partial}_y p_1^{x_n}\partial_y^{\nu_n}\sigma_2^n\hat{\gamma}^n$, there should be a sign
\[
(-1)^{x_1+2x_2+\cdots+nx_n}.
\]
In summary, for $\nu\in P_{m,n}$, $c_\nu$ is equal to
\[
\sum_{x_0+\cdots +x_n=\alpha-m-n} \prod_{j=0}^{n}\left(\begin{array}{c}y_j+x_j\\ x_j \end{array}\right)(-1)^{jx_j}
\]
Separating $j$ from even to odd, we have $c_\nu$ equal to
\[
\begin{split}
&\sum_{x_0+\cdots +x_n=\alpha-m-n} \left[\prod_{j\ \text{is even}}\left(\begin{array}{c}y_j+x_j\\ x_j \end{array}\right)\right] \left[\prod_{j\ \text{is odd}}\left(\begin{array}{c}y_j+x_j\\ x_j \end{array}\right)(-1)^{x_i}\right]\\
=&\sum_{s+t=\alpha-m-n}\left[\sum_{x_0+x_2+\cdots
+x_{\text{even}}=s}\prod_{j\ \text{is
even}}\left(\begin{array}{c}y_j+x_j\\ x_j \end{array}\right)\right]
\left[\sum_{x_1+\cdots+x_{\text{odd}}=t}\prod_{j\
\text{odd}}\left(\begin{array}{c}y_j+x_j\\ x_j
\end{array}\right)(-1)^{x_i}\right]
\end{split}
\]
To evaluate the above number, we introduce the generating function $1/(1-x)^{s+1}$.
The Taylor expansion of $1/(1-x)^{1+s}$ and $1/(1+x)^{1+s}$ at $0$ with $|x|<1$ is
\[
\begin{split}
\frac{1}{(1-x)^{1+s}}&=\sum_{k=0}^\infty\left(\begin{array}{c}s+k\\ k\end{array}\right)x^k,\\
\frac{1}{(1+x)^{1+s}}&=\sum_{k=0}^\infty\left(\begin{array}{c}s+k\\ k\end{array}\right)(-1)^kx^k.
\end{split}
\]
In summary, if we denote $\Lambda_0=\nu_0+\sum_{\text{even i}}\nu_i$ and $\Lambda_1=\sum_{\text{odd i}}\nu_i$, then $c_\nu$ is the coefficient of the term $x^{\alpha-m-n}$ of the Taylor expansion of the following function
\begin{equation}\label{eq:int-c-nu}
\frac{1}{(1-x)^{\Lambda_0+n_0+1}(1+x)^{\Lambda_1+n_1}},
\end{equation}
where $n_0$ (and $n_1$) is the number of positive even (odd) numbers less than or equal to $n$.

We next consider the expansion
\[
\sum_\alpha
\frac{1}{\alpha!}\partial_p^{\alpha}a(x,0)[p_1-i\partial_y-ik\sigma_2\tilde{\partial}_y\hat{\gamma}]^\alpha b(y,p_1)|_{y=x}.
\]
By inserting the expansion of the power $[p_1-i\partial_y-ik\sigma_2\tilde{\partial}_y\hat{\gamma}]^\alpha$, we have the above expansion equal to
\[
\begin{split}
&\sum_{\alpha} \sum_{m,n,\nu}\frac{(-i)^{m+n}k^n}{\alpha!}\partial_p^\alpha a(x,0)c_\nu p_1^{\alpha-m-n}B_{\nu}\sigma_2^n(b)\hat{\gamma}^n\\
=&\sum_{m,n}(-i)^{m+n}k^n\big(\sum_{\nu\in P_{m,n}}\sum_{\alpha} \frac{c_\nu}{p_1^{m+n}}\frac{p_1^\alpha}{\alpha!}\partial_p^\alpha a(x,0)\big) B_{\nu}\sigma_2^n(b)\hat{\gamma}.
\end{split}
\]
We notice that the terms $B_{\nu}\sigma_2^n(b)\hat{\gamma}^k$ are
independent of $\alpha$ and $\nu$, then we are left to deal with $
\frac{c_\nu}{p_1^{m+n}}\frac{p_1^\alpha}{\alpha!}\partial_p^\alpha
a(x,0)$ for the sum over $\alpha$.

Considering the above interpretation of $c_\nu$, if we introduce an
auxiliary variable $t\in \complex-\{0\}$, then we have
$\sum_{\alpha}\frac{c_\nu}{p_1^{m+n}}\frac{p_1^\alpha}{\alpha!}\partial_p^\alpha
a(x,0)$ equal to the $t^0$ term of the product between
\[
\frac{t^{m+n}}{(1-tp_1)^{\Lambda_0+n_0+1}(1+tp_1)^{\Lambda_1+n_1}}
\]
and $a(x,1/t)$ for $|tp_1|<1$. We remark that by
$1/(1-tp_1)^{\Lambda_0+n_0+1}(1+tp_1)^{\Lambda+n_1}$ we really mean
the Taylor expansion with respect to variable $tp_1$ as $|p_1t|<1$,
and by $a(x,1/t)$ we mean the Taylor expansion of $a$ with respect
to the variable $1/t$. As $a$ is assumed to be a polynomial, its
Taylor expansion with respect to variable $1/t$ only has finitely
many terms, and $a(x,1/t)$ is an element in $\complex[x]((t))$. This
assures the product between $a(x,1/t)$ and
$t^{m+n}/(1-tp_1)^{\Lambda_0+n_0+1}(1+tp_1)^{\Lambda_1+n_1}$ well
defined, as a product of two Laurent series of variable $t$. In
conclusion, we conclude that
$\sum_{\alpha}\frac{c_\nu}{p_1^{m+n}}\frac{p_1^\alpha}{\alpha!}\partial_p^\alpha
a(x,0)$ is equal to the $t^0$ component of the product
$t^{m+n}a(x,1/t)1/(1-tp_1)^{\Lambda_0+n_0+1}(1+tp_1)^{\Lambda_1+n_1}$.

Now we relate the above explanation of
$\sum_{\alpha}\frac{c_\nu}{p_1^{m+n}}\frac{p_1^\alpha}{\alpha!}\partial_p^\alpha
a(x,0)$ with the operator $\Delta$ in Proposition \ref{prop:delta}.
Let $f$ be a polynomial of one variable. Then
$\Delta(f)(q_1,q_2)=(f(q_1)-f(q_2))/(q_1-q_2)$ is equal to the $t^0$
component of the product between $f(1/t)$ and $t/(1-tq_1)(1-tq_2)$
for $|tq_1|<1$ and $|tq_2|<1$. In this identification we have viewed
$1/(1-tq_1)(1-tq_2)$ as a Taylor series of variables $t,q_1,q_2$.
Now applying the same trick, we can identify
$\Delta^2(f)(q_1,q_2,q_3)$ as the $t^0$ component of the product
$f(1/t)t^2/(1-tq_1)(1-tq_2)(1-tq_3)$ for $|tq_i|<1$, $i=1,2,3$.
Extending this procedure, we have that in general, for $k\in
\naturals$, $\Delta^{k}(f)(q_1,\dots,q_{k})$ is the $t^0$
component of the product $f(1/t)t^k/(1-tq_1)\cdots(1-tq_k)$ for
$|tq_i|<1$, $i=1,\dots, k$. Finally, comparing this interpretation
of $\Delta^k(f)$, we conclude that
$\sum_{\alpha}\frac{c_\nu}{p_1^{m+n}}\frac{p_1^\alpha}{\alpha!}\partial_p^\alpha
a(x,0)$ is equal to $\Delta^{m+n}(a)$ evaluating at
\[
\underbrace{(x,p_1)\times \cdots \times
(x,p_1)}_{\Lambda_0+n_0+1}\times \underbrace{(x,-p_1)\times \cdots
\times (x-p_1)}_{\Lambda_1+n_1}.
\]
This is exactly the expression of the operator $A_\nu$, introduced
in Section \ref{subsec:bilinear-operators}, on the function $a$ with
respect to the variable $p_1$.

In summary, from the above expression about the symbol of the
operator $Op_k(a)\circ Op_k(b)$, we can quickly check property
(1)-(3) in Theorem \ref{thm:symbol-cal}.

\end{document}